\newlength{\querylen}
\newcommand{\mmp}{\mathbb{P}}
\newcommand{\od}{\overset{{\rm d}}{=}}
\newcommand{\me}{\mathbb{E}}
\newcommand{\E}{\mathbb{E}}
\newcommand{\mr}{\mathbb{R}}
\newcommand{\mn}{\mathbb{N}}
\DeclareMathOperator{\1}{\mathbbm{1}}
\newtheorem{thm}{Theorem}[section]
\newtheorem{lemma}[thm]{Lemma}
\newtheorem{cor}[thm]{Corollary}
\newtheorem{assertion}[thm]{Proposition}
\theoremstyle{definition}
\theoremstyle{remark}
\newtheorem{rem}[thm]{Remark}
\begin{document}
\title{On intermediate levels of nested occupancy scheme in random environment generated by stick-breaking I}

\author{Dariusz Buraczewski\footnote{Mathematical Institute, University of
Wroc{\l}aw, Wroc{\l}aw, Poland; e-mail: dbura@math.uni.wroc.pl}, \ Bohdan Dovgay\footnote{Faculty of Computer Science and
Cybernetics, Taras Shevchenko National University of Kyiv, Kyiv, Ukraine; \ e-mail: bogdov@gmail.com} \
and \ Alexander Iksanov\footnote{Faculty of Computer Science and
Cybernetics, Taras Shevchenko National University of Kyiv, Kyiv, Ukraine; \ e-mail: iksan@univ.kiev.ua}}

\maketitle
\begin{abstract}
\noindent Consider a weighted branching process generated by the lengths of intervals obtained by stick-breaking of unit length (a.k.a.\ the residual allocation model) and associate with each weight a `box'. Given the weights `balls' are thrown independently into the boxes of the first generation with probability of hitting a box being equal to its weight. Each ball located in a box of the $\ell$th generation, independently of the others, hits a daughter box in the $(\ell+1)$th generation with probability being equal the ratio of the daughter weight and the mother weight. This is what we call nested occupancy scheme in random environment. Restricting attention to a particular generation one obtains the classical Karlin occupancy scheme in random environment.

Assuming that the stick-breaking factor has a uniform distribution on $[0,1]$ and that the number of balls is $n$ we investigate occupancy of intermediate generations, that is, those with indices $\lfloor j_n u\rfloor$ for $u>0$, where $j_n$ diverges to infinity at a sublogarithmic rate as $n$ becomes large. Denote by $K_n(j)$ the number of occupied (ever hit) boxes in the $j$th generation. It is shown that the finite-dimensional distributions of the process $(K_n(\lfloor j_n u\rfloor))_{u>0}$, properly normalized and centered, converge weakly to those of an integral functional of a Brownian motion. The case of a more general stick-breaking is also analyzed.
\end{abstract}

\noindent Key words: Bernoulli sieve, GEM distribution, infinite occupancy, random environment, weak convergence, weighted branching process

\noindent 2000 Mathematics Subject Classification: Primary: 60F05, 60J80 \\
\hphantom{2000 Mathematics Subject Classification: }Secondary:
60C05

\section{Introduction}\label{Sect1}

The infinite occupancy scheme is obtained by allocating `balls'
independently over an infinite collection of `boxes' $1$,
$2,\ldots$ with probability $p_r$ of hitting box $r$, $r\in\mn$,
where $\sum_{r\geq 1}p_r=1$. The first articles in which such a
scheme was investigated are \cite{Bahadur:1960, Darling:1967,
Karlin:1967}. Sometimes the infinite occupancy scheme is called
Karlin's occupancy scheme because of Karlin's seminal contribution
\cite{Karlin:1967}. Surveys on the infinite occupancy can be found
in \cite{Barbour+Gnedin:2009, Gnedin+Hansen+Pitman:2007}, see also
p.~207 in \cite{Iksanov:2016}.

The infinite occupancy scheme in a random environment has received
much less attention. The definition of the latter schemes assumes
that probabilities $(P_r)_{r\in\mn}$ are random with an arbitrary joint distribution satisfying $\sum_{r\geq
1}P_r=1$ a.s., and that conditionally given $(P_r)_{r\in\mn}$
`balls' are allocated independently with probability $P_r$ of
hitting box $r$, $r\in\mn$. Known examples of such randomized
infinite occupancy schemes correspond to the environments
(probabilities) given by a residual allocation model
\begin{equation}\label{BS}
P_r:=W_1W_2\cdot\ldots\cdot W_{r-1}(1-W_r),\quad r\in\mn,
\end{equation}
where
\begin{description}[align=left]
\item [(A)] $W_1$, $W_2,\ldots$ are independent copies of a random variable $W$ taking values in $(0,1)$;
\item [(B)] $W_1$, $W_2,\ldots$ are independent random variables such that $W_r$ has a beta distribution with parameters $\alpha r$ and $1$ for
$r\in\mn$ and $\alpha>0$, that is, $\mmp\{W_r\in {\rm d}x\}=\alpha
r x^{\alpha r-1}\1_{(0,1)}(x){\rm d}x$.
\item [(C)] $W_1$, $W_2,\ldots$ are independent random variables such that $W_r$ has a beta distribution with parameters
$\theta+\alpha r$ and $1-\alpha$ for $r\in\mn$, $\alpha\in (0,1)$
and $\theta>-\alpha$, that is, $\mmp\{W_r\in {\rm d}x\}= (1/{\rm
B}(\theta+\alpha r, 1-\alpha)) x^{\theta+\alpha
r-1}(1-x)^{-\alpha}\1_{(0,1)}(x){\rm d}x$, where ${\rm B}(\cdot,
\cdot)$ is the beta function.
\end{description}
In the case C the distribution of $(P_1, P_2,\ldots)$ is called
{\rm GEM} (Griffiths-Engen-McCloskey) distribution with parameters
$\alpha$ and $\theta$ (${\rm GEM}(\alpha,\theta)$). If in the case
A the variable $W$ is beta distributed with parameters $\theta$
and $1$, then the distribution of $(P_1, P_2,\ldots)$ is ${\rm
GEM}(0,\theta)$.

The random occupancy scheme arising in the case A is called
Bernoulli sieve. This scheme was introduced in \cite{Gnedin:2004}
and investigated in many articles. The picture up to 2010 is
surveyed in \cite{Gnedin+Iksanov+Marynych:2010b}. The state of the
art concerning several aspects of the Bernoulli sieve is
summarized in Chapter 5 of \cite{Iksanov:2016}. More recent
contributions include \cite{Alsmeyer+Iksanov+Marynych:2017,
Duchamps+Pitman+Tang:2017+, Iksanov+Jedidi+Bouzeffour:2017,
Pitman+Tang:2017+}. The random occupancy schemes arising in the
cases B and C were partially investigated in
\cite{Robert+Simatos:2009} and \cite{Favaro etal:2018, Pitman+Yakubovich:2017,
Pitman+Yakubovich:2017b} and many other papers, respectively.

Another popular model
(\cite{Barbour+Gnedin:2006,Gnedin+Iksanov:2012,Gnedin+Pitman+Yor:2006,Gnedin+Pitman+Yor:2006a})
of the infinite occupancy scheme in random environment assumes
that the probabilities $(P_r)_{r\in\mn}$ are formed by an
enumeration of the a.s.\ positive points of
\begin{equation}\label{levy}
\{e^{-X(t-)}(1-e^{-\Delta X(t)}): t\geq 0\},
\end{equation}
where $X:=(X(t))_{t\geq 0}$ is a subordinator (that is, a nondecreasing L\'{e}vy process) with $X(0)=0$, zero drift, no
killing and a nonzero L\'{e}vy measure, and $\Delta X(t)$ is a
jump of $X$ at time $t$. Since the closed range of the process $X$
is a regenerative subset of $[0,\infty)$ of zero Lebesgue measure,
we have $\sum_{r\geq 1}P_r=1$ a.s. Note that collection
\eqref{levy} transforms into \eqref{BS} when $X$ is a compound
Poisson process. However, when the L\'{e}vy measure of $X$ is
infinite, collections \eqref{levy} and \eqref{BS} are principally
different.

Following \cite{Bertoin:2008} and \cite{Gnedin+Iksanov:2020} (see also \cite{Joseph:2011,
Businger:2017}) we now define a nested infinite occupancy scheme in random environment. This means that
we construct a nested sequence of the environments $(P_k)$ and the
corresponding `boxes' so that the same collection of `balls' is
thrown into all `boxes'. To this end, we work with a weighted
branching process with positive weights which is nothing else but
a multiplicative counterpart of a branching random walk.

Let $\mathbb{V}=\cup_{n\in\mn_0}\mn^n$ be the set of all possible
individuals of some population, where $\mn_0:=\mn\cup\{0\}$. The ancestor is identified with
the empty word $\varnothing$ and its weight is $P(\varnothing)=1$.
On some probability space $(\Omega, \mathcal{F}, \mmp)$ let
$((P_r(v))_{r\in\mn})_{v \in \mathbb{V}}$ be a family of
independent copies of $(P_r)_{r\in\mn}$. An individual $v
=v_1\ldots v_n$ of the $n$th generation whose weight is denoted by
$P(v)$ produces an infinite number of offspring residing in the $(n+1)$th generation. The
offspring of the individual $v$ are enumerated by $vr = v_1 \ldots
v_n r$, where $r\in \mn$, and the weights of the offspring are
denoted by $P(vr)$. It is postulated that $P(vr)=P(v)P_r(v)$.
Observe that, for each $j\in\mn$, $\sum_{|v|=j} P(v)=1$ a.s.,
where, by convention, $|v|=j$ means that the sum is taken over all
individuals of the $\ell$th generation. For $j\in\mn$, denote by
$\mathcal{F}_j$ the $\sigma$-algebra generated by
$(P(v))_{|v|=1},\ldots, (P(v))_{|v|=j}$. The nested sequence of
environments is formed by the weights of the subsequent
generations individuals, that is, $(P(v))_{|u|=1}$,
$(P(v))_{|v|=2},\ldots$. Further, we identify individuals with
`boxes'. At time $j=0$, all `balls' are collected in the box
$\varnothing$. At time $j=1$, given $\mathcal{F}_1$, `balls' are
allocated independently with probability $P(v)$ of hitting box
$v$, $|v|=1$. At time $j=k$, given $\mathcal{F}_k$, a ball located
in the box $v$ with $|v|=k$ is placed independently of the others
into the box $vr$ with probability $P_r(v)=P(vr)/P(v)$.

From purely mathematical viewpoint, the nested occupancy schemes in random environment are interesting because these include features of the infinite occupancy schemes
and the weighted branching processes. Even though the latter two objects are rather popular, they belong to distinct areas of probability theory. Furthermore, the combined
model should exhibit new effects which are not seen in the individual components. Turning to possible applications recall that the number of occupied boxes in the given
generation of the weighted branching process can be interpreted as the number of types present in a sample of some ‘individuals’. Assuming that the type of the individual is
inherited by her progeny the numbers of occupied boxes in subsequent generations of the nested occupancy scheme model the evolution of the number of various types of
some population as time elapses.

Assume that there are $n$ balls. For $r=1,2,\ldots, n$ and
$j\in\mn$, denote by $K_{n,j,r}$ the number of boxes in the $j$th
generation which contain exactly $r$ balls and set
\begin{equation*}
K_n^{(s)}(j):=\sum_{r=\lceil n^{1-s}\rceil}^n K_{n,j,r},\quad s\in [0,1],
\end{equation*}
where $x\mapsto \lceil x \rceil=\min\{n\in\mathbb{Z}: n\geq x\}$
is the ceiling function. Observe that $K_n(j):=K^{(1)}_n(j)$ is the
number of occupied boxes in the $j$th generation.

For $j\in\mn$ and $t>0$, denote by $\rho_j(t):=\#\{|u|=j: P(u)\geq 1/t\}$ the counting function for the probabilities in the $j$th generation. The earlier investigations of the occupancy schemes in random environment by J. Bertoin \cite{Bertoin:2008}, A. Joseph  \cite{Joseph:2011} and S. Businger \cite{Businger:2017}) focused on the behavior near the boundary of a weighted branching process tree. This setting enabled the authors to make use of various asymptotic properties of the weighted branching process such as the convergence of the Biggins martingale and large deviations. In particular, the information concerning the allocation of balls at early generations was not needed, for it just got lost on the way from the root to the boundary. Businger (in the setting of multitype weighted branching processes) and Joseph were interested in (among other problems) the a.s.\ convergence as $n\to\infty$ of $\inf\{j\in\mn: K_n(j)=n\}$, that is, the index of the earliest generation in which all boxes contain no more than one ball. Alternatively, this can be thought of as the height of the weighted branching process subtree within which the nondecreasing sequence $(K_n(j))_{j\in\mn}$ stays away from $n$. These authors did not investigate weak convergence of the number of occupied boxes at all. Bertoin in Theorem 1 of \cite{Bertoin:2008} proved that $K_n(j_n)/f(n)$ converges almost surely to a limit random variable which includes as a factor the terminal value of the Biggins martingale (it is the presence of the Biggins martingale limit which manifests the influence of the boundary). Here, $f(x)$ is a regularly varying function which is given explicitly, and $(j_n)_{n\in\mn}$ is the sequence of integers satisfying $\lim_{n\to\infty}(j_n-a\log n)=b$ for some $a>0$ and $b\in\mr$. In \cite{Gnedin+Iksanov:2020} the behavior near the root of a weighted branching process tree was investigated. More precisely, the occupancy of any fixed number of fixed (not depending on $n$) generations was analyzed. In this setting the well-developed asymptotic machinery available for the weighted branching processes is useless, and one should rather exploit the recursive structure of the weighted branching process in combination with a precise information about the random environment in the first generation. It turns out the amount of randomness which is present in the first generation remains unchanged in the subsequent fixed generations. Indeed, if the counting function of probabilities in the first generation $\rho_1(t)$, properly normalized, centered and rescaled, converges weakly as $t\to\infty$ to a Gaussian process $G$, say, then the counting function of probabilities in the $j$th generation $\rho_j(t)$, again normalized, centered and rescaled, weakly converges to an integral functional $f_j$ of $G$. The form of the functional is determined by the weighted branching process tree.

In view of the preceding discussion a natural question is: at which generations of the tree is there a phase transition between the root dominance and the boundary dominance? This motivates us to investigate, for the first time, the occupancy in the $j$th generations for $j=j_n \to\infty$ and $j_n=o(\log n)$ as $n\to\infty$. Our results will show (at least for a particular environment) that these {\it intermediate} generations belong to the range of the root dominance, thereby answering the aforementioned question: the phase transition occur at generations with indices $j=j_n\sim c\log n$ for some $c>0$. Throughout the rest of the paper we assume that the distribution of the environment in the first generation is ${\rm GEM}(0,1)$ or follows a more general residual allocation model. Such a restriction is caused by two reasons. First, calculations required become messy even in these particular cases. Second, our purpose is to give a feeling of what is happening in the intermediate generations rather than investigate the occupancy of these generations for the most general environment

Finally, we provide an informal justification of the phase transition under the assumption that the random probabilities in the first generation exhibit an exponential-like decay $P_k\asymp \exp(-k^\rho)$ as $k\to\infty$ for some $\rho>0$. Roughly speaking, the latter is more or less equivalent to $\me \rho_1(t)\asymp (\log t)^{1/\rho}$ as $t\to\infty$. The functions $\me \rho_j(t)$ are still slowly varying for integers $j=o(\log t)$, whereas these become regularly varying for $j\sim c\log t$. One may expect that similarly to the known phenomenon for the infinite occupancy in random environment (that is, when attention is restricted to the first generation) the regular variation of the counting function of probabilities entails the a.s.\ convergence of the number of occupied boxes, properly normalized, whereas the slow variation entails distributional fluctuations of the number of occupied boxes, properly normalized and centered.

\section{Main result}\label{main345}

Throughout the paper we write  $\Rightarrow$, ${\overset{{\rm
d}}\longrightarrow}$ and ${\overset{{\rm f.d.d.}}\longrightarrow}$
to denote weak convergence in a function space, weak convergence
of one-dimensional and finite-dimensional distributions,
respectively.

Assume that the distribution of $(P_1, P_2,\ldots)$ is ${\rm GEM}(0,1)$. This means that \eqref{BS} holds with independent $W_k$'s having a uniform distribution on $[0,1]$, that is, $\mmp\{W_k\in{\rm d}x\}=\1_{(0,1)}(x){\rm d}x$. In \cite{Gnedin+Iksanov:2020}, for a rather wide class of exponentially decaying random probabilities (environments), a functional limit theorem
was proved for the random process $\big(K_n^{(u)}(1),\ldots, K_n^{(u)}(\ell)\big)_{u\geq 0}$ for each $\ell\in\mn$, properly
normalized and centered, as $n\to\infty$. In particular, according to Corollary 4.3 in \cite{Gnedin+Iksanov:2020} the following multivariate central limit theorem holds in the ${\rm GEM}(0,1)$ case: for each $\ell\in\mn$, as $n\to\infty$,
\begin{equation}\label{multivariate}
\Bigg(\frac{(j-1)!\big(K_n(j)-(\log n)^j/j!\big)}{(\log n)^{j-1/2}}\Bigg)_{j=1,\ldots,\ell} ~{\overset{{\rm
d}}\longrightarrow}~ (N_1,\ldots,N_\ell),
\end{equation}
where $(N_1,\ldots,N_\ell)$ is a normal random vector
with zero mean and covariances
\begin{equation}\label{eq:cov_Ni_Nj}
\me N_i N_j = \frac{1}{i+j-1}, \qquad 1\leq i,j \leq \ell.
\end{equation}
Let $(j_n)_{n\in\mn}$ be a sequence of positive numbers satisfying
$j_n\to \infty$ and $j_n=o(\log n)$ as $n\to\infty$. Our purpose
is to investigate weak convergence of the process $(K_n(\lfloor j_n
u\rfloor))_{u>0}$, again properly normalized and centered. Here and hereafter, $x\mapsto \lfloor x \rfloor=\max\{n\in\mathbb{Z}: n\leq x\}$ is the floor function. This provides information about occupancy of {\it intermediate levels} in the infinite occupancy scheme in random environment generated by the uniform stick-breaking, hence the title of the paper. Here is our main result.
\begin{thm}\label{main3}
Assume that $(P_1, P_2,\ldots)$ has the ${\rm GEM}(0,1)$ distribution. Let $(j_n)_{n\in\mn}$ be a sequence of positive numbers satisfying
$j_n\to \infty$ and $j_n=o(\log n)$ as $n\to\infty$. The following
limit theorem holds, as $n\to\infty$,
\begin{equation}\label{clt55}
\Bigg(\frac{\lfloor j_n\rfloor^{1/2}(\lfloor j_n u\rfloor-1)!\big(K_n(\lfloor j_n u\rfloor)-(\log n)^{\lfloor j_n u\rfloor}/\lfloor j_n u\rfloor!\big)}{(\log n)^{\lfloor j_n
u\rfloor-1/2}}\Bigg)_{u>0}~{\overset{{\rm f.d.d.}}\longrightarrow}~\Bigg(\int_{[0,\,\infty)}e^{-uy}{\rm
d}B(y)\Bigg)_{u>0},
\end{equation}
where $(B(v))_{v\geq 0}$ is a standard Brownian motion.
\end{thm}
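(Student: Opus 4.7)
The plan is to argue that the fluctuations of $K_n(\lfloor j_n u\rfloor)$ around $(\log n)^{\lfloor j_n u\rfloor}/(\lfloor j_n u\rfloor)!$ are, at leading order, driven by a \emph{single} source of randomness -- the rate-$1$ Poisson process $(-\log P_r)_{r\in\mn}$ underlying the first-generation ${\rm GEM}(0,1)$ environment -- and then to extract the Brownian limit via a functional invariance principle for this Poisson process. The surplus factor $\sqrt{j_n}$ in the normalisation reflects the $1/(2j-1)$ decay of the limit variance in \eqref{eq:cov_Ni_Nj} as $j=\lfloor j_n u\rfloor\to\infty$, and the fact that the same driving process underlies every $u>0$ will make the FDD statement essentially automatic.

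A Poissonisation of the number of balls, followed by standard de-Poissonisation and the observation that $|\sum_{|v|=j}(1-e^{-nP(v)})-\rho_j(n)|$ concentrates on the annulus $P(v)\asymp 1/n$, will reduce the claim to a CLT for $\rho_j(n)-(\log n)^j/j!$ at $j=\lfloor j_n u\rfloor$ under the same normalisation. Using $\me[\rho_m(t)]=(\log t)^m/m!$ for $t\geq 1$ together with the Poisson-point representation of $(-\log P_r(u))_{r\in\mn}$ in the ${\rm GEM}(0,1)$ environment, telescope over the filtration $(\mathcal{F}_k)_{0\leq k\leq j}$:
\[
\rho_j(n)-\me\rho_j(n) \;=\; \sum_{k=1}^{j}\Delta_k, \qquad \Delta_k\;=\;\sum_{|u|=k-1}\int_0^{(\log(nP(u)))_+}\frac{((\log(nP(u)))_+-s)^{j-k}}{(j-k)!}\bigl(dN^{(u)}(s)-ds\bigr),
\]
with independent rate-$1$ Poisson processes $(N^{(u)})$. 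A direct second-moment computation using the intensity $(-\log y)^{k-2}/((k-2)!\,y)\,dy$ of $(P(u))_{|u|=k-1}$ gives $\operatorname{Var}(\Delta_k)=\binom{2(j-k)}{j-k}(\log n)^{2j-k}/(2j-k)!$, whose consecutive ratios are of order $j/\log n=o(1)$. Hence the martingale sum is asymptotically dominated by $\Delta_1$, whose variance $(\log n)^{2j-1}/((j-1)!^2(2j-1))$ yields exactly the target variance $1/(2u)$ after normalisation; the Lindeberg and negligibility conditions for $\sum_{k\geq 2}\Delta_k$ reduce to this variance comparison.

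The key step is to identify the Gaussian limit of $\Delta_1=\int_0^{\log n}(\log n-s)^{j-1}(j-1)!^{-1}(dN(s)-ds)$, which crucially involves a \emph{single} Poisson process $N$ shared by every $u$. Substituting $s=(\log n/j_n)y$ (the natural scale on which the kernel is non-negligible when $j\sim j_n u$), the compensated Poisson increment rescales to $\sqrt{\log n/j_n}\,dB(y)$ by the functional CLT, while $(1-y/j_n)^{\lfloor j_n u\rfloor-1}\to e^{-uy}$. Combining these gives
\[
\frac{\sqrt{j_n}(\lfloor j_n u\rfloor-1)!}{(\log n)^{\lfloor j_n u\rfloor-1/2}}\,\Delta_1 \;\;\todistr\;\; \int_{[0,\infty)}e^{-uy}\,dB(y).
\]
Because the \emph{same} Brownian motion $B$ governs every $u$, joint convergence for $u_1,\ldots,u_K$ is immediate from the Cram\'er--Wold device together with the pre-limit covariance identity $j_n/(\lfloor j_n u_i\rfloor+\lfloor j_n u_j\rfloor-1)\to 1/(u_i+u_j)$, which matches $\me\bigl[\int e^{-u_iy}dB(y)\cdot\int e^{-u_jy}dB(y)\bigr]$.

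The main obstacle is the uniform-in-$k$ control needed to absorb $\sum_{k\geq 2}\Delta_k$, the Poissonisation error, and the $K_n$-versus-$\rho_j$ discrepancy into a negligible remainder: the fixed-$\ell$ moment bounds of \cite{Gnedin+Iksanov:2020} must be strengthened to track $j=\lfloor j_n u\rfloor$ explicitly. The Poisson-to-Brownian coupling also has to be executed on the rescaled window $y\in[0,j_n]$ where $(1-y/j_n)^{j-1}$ concentrates; this is routine via a strong invariance principle for compensated Poisson processes, but must be combined with the crucial observation that a single $N$ is used across every $u$ so that the limits are coupled on the same Brownian motion.
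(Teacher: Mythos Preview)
Your overall strategy is the same as the paper's: reduce $K_n(\lfloor j_n u\rfloor)$ to $\rho_{\lfloor j_n u\rfloor}(n)$, then split off the first-generation fluctuation and identify its Brownian limit via a functional CLT combined with the substitution $s=(\log n/j_n)y$ that turns $(1-y/j_n)^{\lfloor j_n u\rfloor-1}$ into $e^{-uy}$. The paper writes the decomposition as $Y_1+Y_2+Y_3$ with $Y_3$ equal to your $\Delta_1$ and $Y_2=\sum_{k\ge2}\Delta_k$, and controls $Y_2$ by a recursive variance bound (Lemma~\ref{mom_asy}) whose output $D_j(t)\le\sum_{i=0}^{j-1}\binom{2i}{i}t^{j+i}/(j+i)!$ (up to constants) is exactly your sum of $\operatorname{Var}(\Delta_{j-i})$; so the full martingale telescoping and the paper's one-step-plus-recursion are the same computation viewed from two ends.

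One point needs correction: $(-\log P_r)_{r\ge1}$ is \emph{not} a rate-$1$ Poisson process. In the ${\rm GEM}(0,1)$ setting, $-\log P_r=S_{r-1}+\eta_r$ with $\eta_r=|\log(1-W_r)|$ dependent on $\xi_r=|\log W_r|$; it is the partial-sum process $(S_r)$ that is Poisson, while $(T_r)=(-\log P_r)$ is only a perturbed random walk with the convenient feature $V(t)=\me N(t)=t$. Consequently your formula for $\Delta_k$ is valid with $N^{(u)}$ the perturbed-random-walk counting process and compensator $dV(s)=ds$, but the Poisson isometry $\operatorname{Var}\bigl(\int f\,(dN-ds)\bigr)=\int f^2\,ds$ is unavailable, so your closed form for $\operatorname{Var}(\Delta_k)$ is not justified as stated. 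The paper circumvents this by estimating $\operatorname{Var}\bigl(\sum_r V_{j-1}(t-T_r)\1_{\{T_r\le t\}}\bigr)$ directly from the second-moment structure of the perturbed random walk and then iterating; the resulting upper bound differs from the Poisson answer only by a harmless constant, so your ratio argument $\operatorname{Var}(\Delta_{k+1})/\operatorname{Var}(\Delta_k)=O(j/\log n)$ and the conclusion that $\Delta_1$ dominates still go through once this is fixed. The identification of the limit of $\Delta_1$ and the joint convergence over $u$ are carried out in the paper exactly as you describe (Theorem~\ref{main5}).
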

\begin{rem}
The limit process in Theorem \ref{main3} can be defined via
integration by parts $$R(u):=\int_{[0,\,\infty)}e^{-uy}{\rm
d}B(y)=u\int_0^\infty e^{-uy}B(y){\rm d}y,\quad u>0.$$ The process
$(R(u))_{u>0}$ is a.s.\ continuous on $(0,\infty)$. However, it cannot be
defined by continuity at $u=0$ because of the oscillating behavior
of the Brownian motion at $\infty$. This explains that the limit
theorem holds for $u>0$ rather than $u\geq 0$.
\end{rem}
\begin{rem}
A perusal of the proof of Theorem \ref{main3} reveals that the Brownian motion $(B(u))_{u\geq 0}$ appearing in the limit is the same as the weak limit for $(\rho_1(e^{ut}))_{u\geq 0}$, properly normalized and centered. The function $y\mapsto e^{-uy}$ comes from the renewal structure of the weighted branching process tree. This justifies the claim made in the introduction that the intermediate generations belong to the range of the root dominance.
\end{rem}

It can be checked that $R(u)$ has the same distribution as $(2u)^{-1/2}B(1)$ for each $u>0$. As a consequence, we obtain a one-dimensional central limit theorem when taking in \eqref{clt55} $u=1$.
\begin{cor}
In addition to the assumptions of Theorem \ref{main3} let $(j_n)$ be a sequence of positive integers. Then, as $n\to\infty$,
$$\frac{(2 j_n)^{1/2}(j_n-1)! \big(K_n(j_n)-(\log n)^{j_n}/j_n!\big)}{(\log n)^{j_n-1/2}}~{\overset{{\rm
d}}\longrightarrow}~ B(1).$$
\end{cor}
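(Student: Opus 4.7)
The plan is a direct specialization of Theorem \ref{main3} to the single point $u=1$ together with the one-point Gaussian identity $R(1)\overset{{\rm d}}{=}2^{-1/2}B(1)$ recorded just before the corollary.

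Since $(j_n)_{n\in\mn}$ consists of positive integers, $\lfloor j_n\rfloor=\lfloor j_n\cdot 1\rfloor=j_n$. Projecting the finite-dimensional convergence \eqref{clt55} onto $u=1$ therefore reads
$$\frac{j_n^{1/2}(j_n-1)!\big(K_n(j_n)-(\log n)^{j_n}/j_n!\big)}{(\log n)^{j_n-1/2}}~{\overset{{\rm d}}\longrightarrow}~ R(1),\qquad R(1):=\int_{[0,\infty)}e^{-y}{\rm d}B(y).$$
Multiplying both sides by the deterministic constant $\sqrt{2}$ preserves convergence in distribution and produces exactly the normalizing factor $(2j_n)^{1/2}$ on the left-hand side, with limit $\sqrt{2}\,R(1)$ on the right.

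It then remains to justify $\sqrt{2}\,R(1)\overset{{\rm d}}{=}B(1)$. The Wiener integral $R(1)$ is centered Gaussian, and the It\^{o} isometry yields $\me R(1)^2=\int_0^\infty e^{-2y}{\rm d}y=1/2$, so $R(1)\sim N(0,1/2)$ and consequently $\sqrt{2}\,R(1)\sim N(0,1)\overset{{\rm d}}{=}B(1)$. The same one-line computation with $e^{-uy}$ in place of $e^{-y}$ verifies the general identity $R(u)\overset{{\rm d}}{=}(2u)^{-1/2}B(1)$ claimed in the sentence preceding the corollary.

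There is no genuine obstacle: Theorem \ref{main3} supplies all the analytic work, while the remaining steps amount to a multiplication by a constant and a Wiener-integral variance computation. The only bookkeeping point worth emphasizing is that the prefactor $(2j_n)^{1/2}$ is designed precisely to absorb both the $j_n^{1/2}$ coming from \eqref{clt55} at $u=1$ and the $\sqrt{2}$ needed to standardize the $N(0,1/2)$ limit into $B(1)$.
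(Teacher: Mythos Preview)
Your proof is correct and mirrors the paper's own argument: the corollary is obtained by specializing \eqref{clt55} at $u=1$ (using that $j_n$ is integer so $\lfloor j_n\rfloor=j_n$) and invoking the distributional identity $R(1)\overset{{\rm d}}{=}2^{-1/2}B(1)$, which the paper states just before the corollary and which you justify via the Wiener--It\^{o} isometry.
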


Now we explain how the result of Theorem \ref{main3} can be guessed from ~\eqref{multivariate}. To this end, we note that
$$\me R(u)R(v)=(u+v)^{-1},\quad u,v>0.$$ Taking $i=\lfloor j_n u\rfloor$ and $j=\lfloor j_n v\rfloor$ in~\eqref{eq:cov_Ni_Nj}
we obtain the covariance $1/(\lfloor j_nu\rfloor+ \lfloor j_nv\rfloor-1)\sim (u+v)^{-1}j_n^{-1}$ as $n\to\infty$, where $x_n\sim y_n$ as $n\to\infty$
means that $\lim_{n\to\infty}(x_n/y_n)=1$.

A slight deviation from the ${\rm GEM}(0,1)$ case complicates significantly both the essence of the problem and technicalities. Allowing for a more general stick-breaking factor $W$ in \eqref{BS} and employing a non-principal modification of the proof of Theorem \ref{main3} we can only cover intermediate generations in the range $j_n=o((\log n)^{1/3})$.
\begin{thm}\label{main100}
Let $P_1$, $P_2,\ldots$ be given by \eqref{BS} with independent and identically distributed $W_1$, $W_2,\ldots$ taking values in $(0,1)$. Assume that the distribution of $|\log W|$ is nonlattice, that $\sigma^2:={\rm Var}(\log W)\in (0,\infty)$ and that $\me |\log(1-W)|<\infty$. Let $(j_n)_{n\in\mn}$ be a sequence of positive numbers satisfying
$j_n\to \infty$ and $j_n=o((\log n)^{1/3})$ as $n\to\infty$. The following
limit theorem holds, as $n\to\infty$,
\begin{equation}\label{clt5555}
\Bigg(\frac{\lfloor j_n\rfloor^{1/2}(\lfloor j_n u\rfloor-1)!\big(K_n(\lfloor j_n u\rfloor)-(\mu^{-1}\log
n)^{\lfloor j_n u\rfloor}/\lfloor j_n u\rfloor!\big)}{(\sigma^2 \mu^{-2\lfloor j_n u\rfloor-1} (\log n)^{2\lfloor j_n
u\rfloor-1})^{1/2}}\Bigg)_{u>0}~{\overset{{\rm f.d.d.}}\longrightarrow}~\Bigg(\int_{[0,\,\infty)}e^{-uy}{\rm
d}B(y)\Bigg)_{u>0},
\end{equation}
where $(B(v))_{v\geq 0}$ is a standard Brownian motion and $\mu:=\me |\log W|<\infty$.
\end{thm}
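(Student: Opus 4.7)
The plan is to adapt the three-step argument used for Theorem \ref{main3}, replacing the direct exploitation of the uniform stick-breaking by a functional CLT for the first-generation counting process $N_1(y):=\rho_1(e^y)$. In outline: (i) Poissonise to replace $K_n(\lfloor j_n u\rfloor)$ by the weight-counting proxy $\rho_{\lfloor j_n u\rfloor}(n)$; (ii) iterate the weighted branching recursion to express the centred quantity $\rho_j(n)-(\mu^{-1}\log n)^j/j!$ as an integral of a polynomial kernel against $N_1(y)-\mu^{-1}y$; (iii) apply the FCLT for $N_1$ and perform a scaling substitution to identify the limit with $\int_{[0,\infty)}e^{-uy}\,dB(y)$.

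For Step (i), conditionally on $\mathcal{F}_j$, $K_n(j)$ is a sum of independent Bernoullis, so $\E|K_n(j)-\E[K_n(j)\mid\mathcal{F}_j]|^2\leq \E K_n(j)\lesssim(\mu^{-1}\log n)^j/j!$, which is easily seen to be of smaller order than the squared normaliser $\sigma^2\mu^{-2j-1}(\log n)^{2j-1}/(j_n((j-1)!)^2)$ whenever $j_n=o(\log n)$. A bulk/boundary splitting of $\E[K_n(j)\mid\mathcal{F}_j]-\rho_j(n)$, using the approximate constancy of $(1-P(v))^n$ away from the window $nP(v)\asymp 1$, then reduces matters to a fluctuation analysis of $\rho_{\lfloor j_n u\rfloor}(n)$.

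For Step (ii), I iterate $\rho_j(t)=\sum_{|v|=1}\rho_{j-1}^{(v)}(tP(v))$ and replace each $\rho_{j-1}^{(v)}$ by its mean $m_{j-1}$. The law of total variance conditional on $\mathcal{F}_1$ shows that the leftover sum of independent centred fluctuations is negligible at the target scale provided $j_n=o(\log n)$ (the relevant ratio being of order $\mu j/\log n$). Combining with the approximation $m_{j-1}(s)\approx(\mu^{-1}\log s)^{j-1}/(j-1)!$ and the identity $\sum_{r\geq 1}h(\log(tP_r))_+=\int_0^{\log t}h(\log t-y)\,dN_1(y)$ yields
\[
\rho_j(n)-\frac{(\mu^{-1}\log n)^j}{j!} \;\approx\; \int_0^{\log n}\frac{(\mu^{-1}(\log n-y))^{j-1}}{(j-1)!}\,d\bigl(N_1(y)-\mu^{-1}y\bigr).
\]
Under the hypotheses, the perturbed-random-walk FCLT gives $(N_1(\lambda y)-\mu^{-1}\lambda y)/\sqrt{\lambda}\Rightarrow \sigma\mu^{-3/2}B(y)$ on $D[0,\infty)$; setting $\lambda=\log n$, substituting $y=\lambda z$ and using Brownian scaling inside the stochastic integral recasts the right-hand side in distribution as $\sigma\mu^{-j-1/2}(\log n)^{j-1/2}/(j-1)!\cdot\int_0^1(1-z)^{j-1}\,dB(z)$.

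After dividing by the announced normaliser, Step (iii) reduces to showing that $j_n^{1/2}\int_0^1(1-z)^{\lfloor j_n u\rfloor-1}\,dB(z)$ converges in law, jointly over finitely many $u>0$, to $\int_{[0,\infty)}e^{-uw}\,dB(w)$. The substitution $w=j_n z$ together with Brownian scaling and the pointwise limit $(1-w/j_n)^{\lfloor j_n u\rfloor-1}\to e^{-uw}$ provides this limit, and the covariance check $j_n/(\lfloor j_n u\rfloor+\lfloor j_n v\rfloor-1)\to 1/(u+v)=\E R(u)R(v)$ confirms joint convergence within the Gaussian family. The main obstacle is the bookkeeping of error propagation across $\lfloor j_n u\rfloor$ recursive levels: the conditional-variance bound for Step (ii) alone gives only $j_n=o(\log n)$, but the sub-leading corrections in the expansion of $m_j$ together with the Gaussian approximation remainder for $N_1$ accumulate combinatorial factors per level, and controlling their joint contribution is exactly what forces the stronger hypothesis $j_n=o((\log n)^{1/3})$.
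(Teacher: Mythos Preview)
Your three-step plan --- replace $K_n(j)$ by $\rho_j(n)$, condition on the first generation and replace the subtree counts by their means, then apply the FCLT for $N_1$ --- is exactly the paper's strategy, and your Step~(iii) limit identification via Gaussian covariances is correct. The gap is in the execution of Step~(ii), where a key device is missing.

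The paper does not carry the non-stationary counting process $N=N_1$ through Step~(ii). Instead it passes to the \emph{stationary} perturbed random walk $\bar T_i=\bar S_0+T_i$ (with $\bar S_0$ having the equilibrium distribution), at the cost of two extra error terms: $X_2=N_j(\log n)-\bar N_j(\log n)$ and $X_4$, the latter comparing $\bar V_{j-1}$ to the polynomial $(\mu^{-1}\cdot)^{j-1}/(j-1)!$. Each of these is controlled by a first-moment bound of order $(j_n^3/\log n)^{1/2}$ via the two-term expansion of Proposition~\ref{aux5000} and Lemma~\ref{aux50000}; it is \emph{these} terms, not any ``Gaussian approximation remainder for $N_1$'', that force $j_n=o((\log n)^{1/3})$. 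The reward for going stationary is the exact one-sided bound $\bar V_j(t)\leq t^j/(j!\mu^j)$, which is what makes the recursive variance estimate for $\bar N_j-\sum_r\bar V_{j-1}(t-\bar T_r)\1_{\{\bar T_r\le t\}}$ (Lemma~\ref{mom_asy}(b)) close under $j_n=o((\log n)^{1/2})$. Your claim that the Step~(ii) variance ratio is ``of order $\mu j/\log n$'' is off by a factor of $j$ (it is $j^2/\log n$), and proving even that weaker bound with the non-stationary $V_j$ would require propagating the two-sided Lorden correction through the full recursion for $D_j$, which the stationary trick avoids. A smaller issue in Step~(i): the indicators $\1_{\{Z_{n,v}\geq 1\}}$ are not conditionally independent given $\mathcal{F}_j$ (the $Z_{n,v}$ are multinomial, not Poisson), so your variance bound $\le\me K_n(j)$ needs an argument; the paper instead uses a direct first-moment bound on $|K_n(j)-\rho_j(n)|$ via the split into $\{Z_{n,v}\geq 1,\,nP(v)<1\}$ and $\{Z_{n,v}=0,\,nP(v)\geq 1\}$.
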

\begin{rem}
The referee has kindly informed us that replacing in \eqref{clt5555} $\lfloor j_n \rfloor^{1/2}$ by $\lfloor j_n u \rfloor^{1/2}$ would lead to a limit process $(\bar R(u))_{u\geq 0}$ given by   $\bar R(u): = u^{1/2}R(u)$. Such an alternative formulation would offer an additional bonus which consists in stationarity of $(\bar R(e^u))_{u\in\mr}$. Replacing in this alternative formulation
$\lfloor j_n u \rfloor$ by $\lfloor j_n u \rfloor+1$ would make Theorem \ref{main100} hold for $u=0$, as well. The same remarks apply equally well to Theorem \ref{main3}.
\end{rem}

We believe that a central limit theorem with a more complicated centering still holds in the range ${\rm const}\,(\log n)^{1/3}\leq j_n=o((\log n)^{1/2})$. However, we have no reasonable conjecture for the range ${\rm const}\,(\log n)^{1/2}\leq j_n=o(\log n)$.

\section{Limit theorems for a special branching random walk}

It is more convenient to work with a branching random walk which is an additive counterpart of the original weighted branching process obtained by the logarithmic transformation.

The proofs of Theorems \ref{main3} and \ref{main100} are based on several intermediate results, and the three of these will be deduced from limit theorems for a special branching random walk. Let $(\xi_i, \eta_i)_{i\in\mn}$ be independent copies of a random vector $(\xi, \eta)$ with positive arbitrarily dependent components. Denote by $(S_i)_{i\geq 0}$ the zero-delayed
ordinary random walk with increments $\xi_i$ for $i\in\mn$, that is, $S_0:=0$ and $S_i:=\xi_1+\ldots+\xi_i$ for $i\in\mn$. Define
\begin{equation*}
T_i:=S_{i-1}+\eta_i,\quad i\in \mn.
\end{equation*}
The sequence $T:=(T_i)_{i\in\mn}$ is called {\it perturbed random walk}. A survey of various results for the so defined perturbed random walks can be found in the book \cite{Iksanov:2016}. Put $N(t):=\sum_{i\geq 1}\1_{\{T_i\leq t\}}$ and $V(t):=\me N(t)$ for $t\geq 0$. It is clear that
\begin{equation}\label{equ}
V(t)=\me U((t-\eta)^+)=\int_{[0,\,t]}U(t-y){\rm d}G(y),\quad t\geq 0
\end{equation}
where, for $t\geq 0$, $U(t):=\sum_{i\geq 0}\mmp\{S_i\leq t\}$ is
the renewal function and $G(t)=\mmp\{\eta\leq t\}$.

Assuming that the distribution of $\xi$ is nonlattice and ${\tt m}:=\me \xi<\infty$, let $\bar S_0$ be a random variable with distribution
$$\mmp\{\bar S_0\in {\rm d}x\}={\tt m}^{-1}\mmp\{\xi>x\}\1_{(0,\infty)}(x){\rm d}x$$ which is independent of $(\xi_i, \eta_i)_{i\in\mn}$. Define a stationary version of $T$ by
\begin{equation*}
\bar T_i:=\bar S_0+ T_i=\bar S_0+ S_{i-1}+\eta_i=:\bar S_{i-1}+\eta_i,\quad i\in \mn.
\end{equation*}
Put $\bar N(t):=\sum_{i\geq 1}\1_{\{\bar T_i\leq t\}}$ and $\bar V(t):=\me \bar N(t)$ for $t\geq 0$. Since $\sum_{i\geq 0}\mmp\{\bar S_i\leq t\}={\tt m}^{-1}t$ for $t\geq 0$ (see, for instance, formula (3.7) on p.~91 in \cite{Iksanov:2016}), we infer
\begin{equation}\label{equsta}
\bar V(t)={\tt m}^{-1}\int_{[0,\,t]}(t-y){\rm d}G(y)={\tt m}^{-1}\int_0^t G(y){\rm d}y,\quad t\geq 0.
\end{equation}

Let $T^\ast:=(T_i^\ast)_{i\in\mn}$ denote both $T$ and $\bar T:=(\bar T_i)_{i\in\mn}$. We are now ready to recall the construction of a branching random walk in the special case it is generated by $T^\ast$. At time $0$ there is one individual, the ancestor. The ancestor produces offspring (the first generation) with positions on $[0,\infty)$ given by the points of $T^\ast$. The
first generation produces the second generation. The displacements of positions of the second generation individuals with respect to their mothers' positions are distributed according to
copies of $T^\ast$, and for different mothers these copies are independent. The second generation produces the third one, and so on. All individuals act independently of each other.

In what follows we use $\ast$ to denote the quantities pertaining to both $T$ and $\bar T$. For $t\geq 0$ and $j\in\mn$, denote by $N^\ast_j(t)$ the number of the $j$th generation individuals with positions $\leq t$ and put $V^\ast_j(t):=\me N^\ast_j(t)$. Then $N^\ast_1(t)=N^\ast(t)$, $V^\ast_1(t)=V^\ast(t)$ and $$V^\ast_j(t)=\int_{[0,\,t]} V^\ast_{j-1}(t-y){\rm d}V^\ast(y),\quad j\geq 2,~ t\geq 0.$$
The basic decomposition that we need reads
\begin{equation}\label{basic1232}
N^\ast_j(t)=\sum_{r\geq 1}N^{(\ast,\, r)}_{j-1}(t-T^\ast_r)\1_{\{T^\ast_r\leq t\}},\quad j\geq 2,\, t\geq 0,
\end{equation}
where $N_{j-1}^{(\ast, r)}(t)$ is the number of successors in the $j$th generation which reside in the interval $[T^\ast_r,t+T^\ast_r]$ of the first generation individual with position $T^\ast_r$. By the branching property, $(N_{j-1}^{(\ast, 1)}(t))_{t\geq 0}$,
$(N_{j-1}^{(\ast, 2)}(t))_{t\geq 0},\ldots$ are independent copies of
$(N^\ast_{j-1}(t))_{t\geq 0}$ which are also independent of $T^\ast$.  Note that, for
$j\geq 2$, $(N^\ast_j(t))_{t\geq 0}$ is a particular instance of a
random process with immigration at random times (the term was introduced in
\cite{Dong+Iksanov:2020}, see also \cite{Iksanov+Rashytov:2020}).

The following observations are needed for the proof of Theorem \ref{main100}.
\begin{assertion}\label{1formain100}
Assume that the distribution of $\xi$ is nonlattice, that ${\tt s}^2:={\rm Var}\,\xi \in (0,\infty)$ and that $\me \eta<\infty$. Let $t\mapsto j(t)$ be any positive function satisfying $j(t)\to \infty$ and $j(t)=o(t^{1/3})$ as $t\to\infty$. Then
\begin{equation}\label{inter1}
\bigg(\frac{\lfloor j(t)\rfloor^{1/2}(\lfloor j(t)u\rfloor-1)!}{{\tt m}^{-\lfloor j(t)u\rfloor}t^{\lfloor j(t)u\rfloor-1/2}}\big|N_{\lfloor j(t)u\rfloor}(t)-\bar N_{\lfloor j(t)u\rfloor}(t)\big|\bigg)_{u>0}~{\overset{{\rm f.d.d.}}\longrightarrow}~(\Theta(u))_{u>0},
\end{equation}
where $\Theta(u):=0$ for $u>0$ and ${\tt m}=\me\xi<\infty$, and
\begin{eqnarray}
&&\left(\frac{\lfloor j(t)\rfloor^{1/2}(\lfloor j(t)u\rfloor-1)!}{{\tt m}^{-\lfloor j(t)u\rfloor}t^{\lfloor j(t)u\rfloor-1/2}}\bigg|\sum_{r\geq 1}\Big(\frac{(t-\bar T_r)^{\lfloor j(t)u\rfloor-1}}{(\lfloor j(t)u\rfloor-1)!{\tt m}^{\lfloor j(t)u\rfloor-1}}-\bar V_{\lfloor j(t)u\rfloor-1}(t-\bar T_r)\Big)\1_{\{\bar T_r\leq t\}}\bigg|\right)_{u>0}\notag\\
&~{\overset{{\rm f.d.d.}}\longrightarrow}~& (\Theta(u))_{u>0}.\label{inter2}
\end{eqnarray}
\end{assertion}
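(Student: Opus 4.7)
Both \eqref{inter1} and \eqref{inter2} concern convergence to the deterministic zero process, so it suffices to prove convergence in probability at each fixed $u>0$. The approach is a Chebyshev argument after separately estimating the mean and the variance of the normalized quantity; throughout I abbreviate $j:=\lfloor j(t)u\rfloor$.

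The mean estimates come from a convolution-telescoping argument. Under the hypotheses the key renewal theorem yields $U(t)={\tt m}^{-1}t+O(1)$ as $t\to\infty$, so $V-\bar V$ is bounded on $[0,\infty)$, say by a constant $C$. Writing
\[
V_j(t)-\bar V_j(t)=\int_{[0,t]}V_{j-1}(t-y)\,d(V-\bar V)(y)+\int_{[0,t]}(V_{j-1}-\bar V_{j-1})(t-y)\,d\bar V(y),
\]
integrating by parts in the first integral, and applying the crude bound $d\bar V(y)\le{\tt m}^{-1}\,dy$, an induction on $j$ yields $|V_j(t)-\bar V_j(t)|\le Cj\cdot t^{j-1}/((j-1)!\,{\tt m}^{j-1})$. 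Multiplied by the normalizer $j^{1/2}(j-1)!\,{\tt m}^j/t^{j-1/2}$, this is $O(j^{3/2}/t^{1/2})$, which is $o(1)$ precisely under the hypothesis $j(t)=o(t^{1/3})$. The same telescoping, now comparing $\bar V^{\ast(j-1)}$ against the pure-drift iterate $\tilde V^{\ast(j-1)}(s)=s^{j-1}/((j-1)!\,{\tt m}^{j-1})$, gives $|h_{j-1}(s)|\le C'(j-1)s^{j-2}/((j-2)!\,{\tt m}^{j-2})$ for $h_{j-1}(s):=s^{j-1}/((j-1)!\,{\tt m}^{j-1})-\bar V_{j-1}(s)$. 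Integration against $d\bar V$ then shows that the mean appearing in \eqref{inter2} has the same order $j\,t^{j-1}/((j-1)!\,{\tt m}^{j-1})$, and again vanishes after normalization in the stated regime.

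For the variance in \eqref{inter2}, the sum $\sum_r h_{j-1}(t-\bar T_r)\1_{\{\bar T_r\le t\}}$ is a functional of the renewal point process $\{\bar T_r\}$, so the standard second-moment identity together with the pointwise bound on $h_{j-1}^2$ readily gives a variance which, after dividing by the squared normalization, is $o(1)$ under $j=o(t^{1/3})$. The variance estimate for \eqref{inter1} is the delicate point: the naive bound $\mathrm{Var}(N_j-\bar N_j)\le 2(\mathrm{Var}\,N_j+\mathrm{Var}\,\bar N_j)$ is of the same order as the inverse squared normalization (as can be read off from Theorem~\ref{main100} itself) and is therefore insufficient. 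I would instead couple the two branching random walks on a common genealogical tree, writing $\bar T_i=\bar S_0+T_i$ at the first level and, for every node $v\in\mathbb{V}$, attaching an independent stationary overshoot $\bar S_0^{(v)}$ to reconstitute the $\bar T$-branching random walk from the $T$-one; in particular $\bar N(t)=N((t-\bar S_0)^+)$, so $\mathrm{Var}(N(t)-\bar N(t))=O(1)$ because $\me\bar S_0<\infty$ thanks to ${\rm Var}\,\xi<\infty$. Iterating \eqref{basic1232} in both models simultaneously expresses $N_j-\bar N_j$ as a telescoping sum of one-generation discrepancies, each propagated through the renewal asymptotics of lower generations, and a careful second-moment computation in the coupling then yields $\mathrm{Var}(N_j(t)-\bar N_j(t))=o(t^{2j-1}/(j((j-1)!)^2{\tt m}^{2j}))$ under $j=o(t^{1/3})$. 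Chebyshev closes both assertions.

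The main obstacle is precisely this variance estimate for \eqref{inter1}: the naive bound falls short by the very factor $j^{3/2}/t^{1/2}$ that the $o(t^{1/3})$ hypothesis is engineered to control, so one must extract genuine cancellation between $N_j$ and $\bar N_j$ via the joint coupling and a branching second-moment argument. This extraction of cancellation is also the technical reason why Theorem~\ref{main100} must be restricted to the narrower range $j_n=o((\log n)^{1/3})$ compared with the ${\rm GEM}(0,1)$ case of Theorem~\ref{main3}.
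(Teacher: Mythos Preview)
Your mean estimates are on the right track and essentially match the paper's computations, but you have badly overcomplicated the proof by insisting on a Chebyshev/variance argument. The paper avoids the variance step entirely by using a \emph{first-moment} Markov inequality, which is legitimate because both quantities inside the absolute values are almost surely nonnegative.

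For \eqref{inter1}: under the natural coupling $\bar T_i=\bar S_0+T_i$ (and independent copies of $\bar S_0$ at every node of the tree), every position in the $\bar T$-branching random walk dominates the corresponding one in the $T$-branching random walk, so $N_j(t)\ge \bar N_j(t)$ almost surely for every $j$ and $t$. Hence $\me|N_j(t)-\bar N_j(t)|=V_j(t)-\bar V_j(t)$, and your own convolution-telescoping bound $|V_j-\bar V_j|\le Cj\,t^{j-1}/((j-1)!\,{\tt m}^{j-1})$ already finishes the job via Markov. No variance estimate, no ``extraction of cancellation,'' no second-moment coupling is needed.

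For \eqref{inter2}: the summand $h_{j-1}(s)=s^{j-1}/((j-1)!\,{\tt m}^{j-1})-\bar V_{j-1}(s)$ is nonnegative for all $s\ge0$ (this follows from $\bar V(t)\le {\tt m}^{-1}t$ and iterated convolution; it is the left inequality in \eqref{ineq2222}). So the absolute value is redundant, the expectation equals $\int_{[0,t]}h_{j-1}(t-y)\,d\bar V(y)$, and your pointwise bound on $h_{j-1}$ plus $d\bar V(y)\le{\tt m}^{-1}dy$ again gives an $O(j^{3/2}/t^{1/2})$ contribution after normalization.

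In short, what you flagged as ``the main obstacle'' does not exist. The $o(t^{1/3})$ restriction is indeed the sharp threshold for these first-moment bounds to vanish (the normalized mean is of order $j^{3/2}/t^{1/2}$), but it has nothing to do with variance cancellation.
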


Theorems \ref{main4} and \ref{main5} are important ingredients for the proofs of Theorems \ref{main3} and \ref{main100}. As usual, $\od$ denotes equality of distributions.
\begin{thm}\label{main4}
Let $t\mapsto j(t)$ be any positive function satisfying $\lim_{t\to\infty} j(t)=\infty$. Assume that either
\begin{equation}\label{equa}
(\xi,\eta)\od (|\log W|, |\log(1-W)|),
\end{equation}
where $W$ has a uniform distribution on $[0,1]$, and $j(t)=o(t)$ as $t\to\infty$, or the distribution of $\xi$ is nonlattice, ${\tt s}^2={\rm Var}\,\xi \in (0,\infty)$, $\me \eta<\infty$, and $j(t)=o(t^{1/2})$ as $t\to\infty$. Then, as $t\to\infty$, $$\bigg(\frac{\lfloor j(t)\rfloor^{1/2}(\lfloor j(t)u\rfloor-1)!}{{\tt m}^{-\lfloor j(t)u\rfloor} t^{\lfloor j(t)u\rfloor-1/2}}\Big(N^\ast_{\lfloor j(t)u\rfloor}(t)-\sum_{r\geq 1}V^\ast_{\lfloor j(t)u\rfloor-1}(t-T^\ast_r)\big)\1_{\{T^\ast_r\leq t\}}\bigg)_{u>0}~{\overset{{\rm f.d.d.}}\longrightarrow}~(\Theta(u))_{u>0},$$ where $\Theta(u)=0$ for $u>0$ and ${\tt m}=\me\xi<\infty$ (${\tt m}=1$ when \eqref{equa} prevails).
\end{thm}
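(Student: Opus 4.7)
The plan is to apply the first-generation decomposition \eqref{basic1232} and estimate the second moment of the resulting difference. Writing $j=\lfloor j(t)u\rfloor$ and
$$D_j^\ast(t):=N_j^\ast(t)-\sum_{r\geq 1}V_{j-1}^\ast(t-T_r^\ast)\1_{\{T_r^\ast\leq t\}},$$
\eqref{basic1232} rewrites this as
$$D_j^\ast(t)=\sum_{r\geq 1}\bigl(N_{j-1}^{(\ast,r)}(t-T_r^\ast)-V_{j-1}^\ast(t-T_r^\ast)\bigr)\1_{\{T_r^\ast\leq t\}}.$$
Conditionally on $T^\ast$ the summands are independent, centered, and independent of $T^\ast$ by the branching property, so
$$\me[D_j^\ast(t)^2]=\int_{[0,\,t]}{\rm Var}(N_{j-1}^\ast(t-y))\,dV^\ast(y),$$
reducing the theorem to a single-integral estimate and to finding a suitable bound on ${\rm Var}(N_k^\ast(s))$.

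The central technical step is a variance bound of the form
$${\rm Var}(N_k^\ast(s))\leq \frac{C\,s^{2k-1}}{((k-1)!)^2{\tt m}^{2k-1}}$$
with a constant $C$ uniform in $k$ (or growing slowly). Under \eqref{equa} the first-generation point process $(T_r)$ coincides in distribution with the arrivals of a rate-one Poisson process on $[0,\infty)$ -- the classical description of the points $\{-\log P_r\}$ under ${\rm GEM}(0,1)$. Campbell's formula then produces the exact recursion
$${\rm Var}(N_k(t))=\int_0^t {\rm Var}(N_{k-1}(y))\,dy+\int_0^t V_{k-1}(y)^2\,dy,\quad k\geq 2,$$
starting from ${\rm Var}(N_1(t))=V_1(t)=t$ and $V_k(t)=t^k/k!$; a direct induction yields the leading term ${\rm Var}(N_k(t))\sim t^{2k-1}/((2k-1)((k-1)!)^2)$. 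In the general nonlattice case the analogue is obtained by expanding the second moment of $\sum_r V_{k-1}^\ast(t-T_r^\ast)\1$ via the two-point renewal measure; using the renewal theorem, ${\tt s}^2<\infty$ and $\me\eta<\infty$, the same leading order is recovered together with controlled corrections.

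Substituting the variance bound and $dV^\ast(y)\approx dy/{\tt m}$ yields
$$\me[D_j^\ast(t)^2]\;\asymp\;\frac{t^{2j-2}}{(2j-2)(2j-3)((j-2)!)^2{\tt m}^{2j-2}}.$$
Multiplying by the squared normalization $\lfloor j(t)\rfloor((j-1)!)^2{\tt m}^{2j}/t^{2j-1}$ and using $(j-1)^2=((j-1)!/(j-2)!)^2$, one obtains a bound of order $\lfloor j(t)\rfloor/t$ in case (i) and of order $\lfloor j(t)\rfloor^2/t$ in case (ii), the extra factor of $j(t)$ reflecting the renewal-variance corrections absent in the Poisson setting. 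These vanish precisely under $j(t)=o(t)$ and $j(t)=o(t^{1/2})$ respectively, so Chebyshev's inequality gives convergence to $0$ in probability of the normalized marginal at each fixed $u>0$. Since the limit is the deterministic function $\Theta\equiv 0$, finite-dimensional convergence follows immediately from marginal convergence.

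The main obstacle is the uniform-in-$k$ variance bound in the general nonlattice setting. Unlike the Poisson case where Campbell's formula is an exact identity, one must control the two-point correlation function of the renewal measure and propagate the resulting error terms through the inductive BRW recursion while keeping constants essentially uniform in the generation index $k$. It is this propagation step that is responsible for the more restrictive rate $j(t)=o(t^{1/2})$ in case (ii).
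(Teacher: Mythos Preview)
Your overall strategy coincides with the paper's: reduce to a second-moment bound via Chebyshev and the Cram\'er--Wold device, use the branching decomposition \eqref{basic1232} to write $\me[D_j^\ast(t)^2]=\int_{[0,t]}{\rm Var}\,N_{j-1}^\ast(t-y)\,dV^\ast(y)$, and control ${\rm Var}\,N_k^\ast$ recursively. This is exactly Lemma~\ref{mom_asy} together with the short deduction in Section~\ref{auxded}, and your final orders $j/t$ and $j^2/t$ for the normalized second moment in cases (i) and (ii) match the paper's.

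There is, however, a genuine error in your treatment of case (i). You assert that under \eqref{equa} the point process $\{T_r\}=\{-\log P_r\}$ is a rate-one Poisson process, and then invoke Campbell's formula to obtain the exact identity ${\rm Var}\,N_k(t)=\int_0^t{\rm Var}\,N_{k-1}(y)\,dy+\int_0^t V_{k-1}(y)^2\,dy$ with initial value ${\rm Var}\,N_1(t)=t$. This Poisson claim is false: if $\{-\log P_r\}$ were Poisson with unit intensity, then $\sum_r P_r=\sum_r e^{-(-\log P_r)}$ would have variance $\int_0^\infty e^{-2x}\,dx=1/2$, contradicting $\sum_r P_r=1$ a.s. What \emph{is} special about the uniform case is that the random walk $(S_r)$ has exponential(1) increments, so $\tilde U(t)=t$, $V(t)=t$ and $V_j(t)=t^j/j!$ exactly. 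The paper's Lemma~\ref{mom_asy}(a) bounds $I_j(t):={\rm Var}\bigl(\sum_r V_{j-1}(t-T_r)\1_{\{T_r\leq t\}}\bigr)$ not via Campbell but by expanding the square and using the two-point renewal structure of $(S_r)$ (see \eqref{259}); this yields $I_j(t)\leq t^{2j-2}/((j-1)!)^2+5t^{2j-1}/((2j-1)((j-1)!)^2)$, a constant factor larger than your Campbell answer but of the same order, so the recursion still closes. Your sketch for case (ii) (``expand via the two-point renewal measure'') is correct in spirit but omits the substantial bookkeeping needed to propagate the correction terms through the recursion; the paper carries this out in full in the proof of Lemma~\ref{mom_asy}(b), working with the stationary version $\bar N$ precisely because $\sum_{r\geq 0}\mmp\{\bar S_r\leq t\}={\tt m}^{-1}t$ holds exactly there.
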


In what follows we denote by $D(0,\infty)$ ($D[0,\infty)$) the
Skorokhod space of right-continuous functions defined on
$(0,\infty)$ (on $[0,\infty)$) with finite limits from the left at positive points.
\begin{thm}\label{main5}
Assume that ${\tt s}^2={\rm Var}\,\xi\in (0,\infty)$ and that $\me \eta<\infty$. Let $t\mapsto j(t)$ be any positive function satisfying $j(t)\to \infty$
and $j(t)=o(t)$ as $t\to\infty$.  Then, as $t\to\infty$,
\begin{eqnarray}\label{clt22}
&&\left(\frac{\lfloor j(t)\rfloor^{1/2}(\lfloor j(t)u\rfloor-1)!}{({\tt s}^2{\tt m}^{-2\lfloor j(t)u\rfloor-1}t^{2\lfloor j(t)u\rfloor-1})^{1/2}}\bigg(\sum_{r\geq
1}\frac{(t-T^\ast_r)^{\lfloor j(t)u\rfloor-1} \1_{\{T^\ast_r\leq
t\}}}{(\lfloor j(t)u\rfloor-1)!{\tt m}^{\lfloor j(t)u\rfloor-1}}-\frac{t^{\lfloor j(t)u\rfloor}}{(\lfloor j(t)u\rfloor)!{\tt m}^{\lfloor j(t)u\rfloor}}\bigg)\right)_{u>0}\notag\\&~\Rightarrow~& \Bigg(\int_{[0,\,\infty)}e^{-uy}{\rm d}B(y)\Bigg)_{u>0}
\end{eqnarray}
in the $J_1$-topology on $D(0,\infty)$, where $(B(v))_{v\geq 0}$ is a standard Brownian motion, ${\tt m}=\me\xi<\infty$ (when $T^\ast=\bar T$ it is implicitly assumed that the distribution of $\xi$ is nonlattice). In particular, we have in \eqref{clt22} weak convergence of the finite-dimensional distributions.
\end{thm}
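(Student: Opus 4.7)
The first step is to rewrite the expression inside the outer parentheses on the left-hand side of \eqref{clt22} as a single Riemann--Stieltjes integral against the centered counting process. Setting $k:=\lfloor j(t)u\rfloor$ and noting that $\int_0^t \frac{(t-y)^{k-1}}{(k-1)!\,{\tt m}^{k}}\,dy=\frac{t^k}{k!\,{\tt m}^k}$, one obtains
$$\sum_{r\geq 1}\frac{(t-T_r^\ast)^{k-1}\1_{\{T_r^\ast\leq t\}}}{(k-1)!\,{\tt m}^{k-1}}-\frac{t^k}{k!\,{\tt m}^k}=\int_{[0,\,t]}\frac{(t-y)^{k-1}}{(k-1)!\,{\tt m}^{k-1}}\,d\bigl(N^\ast(y)-y/{\tt m}\bigr).$$
Substituting $y=tz$ and multiplying by the prefactor from \eqref{clt22}, this becomes
$$X_t(u):=\frac{{\tt m}^{3/2}\lfloor j(t)\rfloor^{1/2}}{{\tt s}}\int_{[0,\,1]}(1-z)^{k-1}\,dW_t(z),\qquad W_t(z):=t^{-1/2}\bigl(N^\ast(tz)-tz/{\tt m}\bigr).$$
Since $W_t$ is right-continuous with $W_t(0)=0$ a.s.\ (because $T_r^\ast>0$ a.s.) and $(1-z)^{k-1}$ is smooth and vanishes at $z=1$, Stieltjes integration by parts yields
$$X_t(u)=\frac{{\tt m}^{3/2}\lfloor j(t)\rfloor^{1/2}(k-1)}{{\tt s}}\int_0^1(1-z)^{k-2}W_t(z)\,dz.$$

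The second step aligns the two asymptotic scales present: the $\sqrt{t}$ fluctuation scale of $W_t$ and the concentration scale $1/j(t)$ of $(1-z)^{k-2}$ near $z=0$. Substituting $z=y/j(t)$ and setting $\tau_t:=t/j(t)$ together with $\tilde W_t(y):=\sqrt{j(t)}\,W_t(y/j(t))=\tau_t^{-1/2}(N^\ast(\tau_t y)-\tau_t y/{\tt m})$, one obtains
$$X_t(u)=\frac{{\tt m}^{3/2}(k-1)}{{\tt s}\,j(t)}\int_0^{j(t)}\bigl(1-y/j(t)\bigr)^{k-2}\tilde W_t(y)\,dy.$$
As $j(t)=o(t)$ forces $\tau_t\to\infty$, Donsker's invariance principle for the counting process of a perturbed random walk, valid under the hypotheses $\mathrm{Var}\,\xi={\tt s}^2\in(0,\infty)$, $\me\eta<\infty$ (and nonlattice $\xi$ in the $\bar T$ case), gives $\tilde W_t\Rightarrow ({\tt s}/{\tt m}^{3/2})\tilde B$ in $D[0,L]$ for every $L>0$, with $\tilde B$ a standard Brownian motion. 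Via the Skorokhod representation, the pointwise convergences $(k-1)/j(t)\to u$ and $(1-y/j(t))^{k-2}\to e^{-uy}$ (uniform in $y$ on compacts) together with the uniform bound $(1-y/j(t))^{k-2}\leq e^{-(k-2)y/j(t)}$ and the standard estimate $\me\sup_{y\leq L}\tilde W_t(y)^2\leq CL$ (which control the tails over $[L,j(t)]$ and $[L,\infty)$) yield, by a truncation argument,
$$X_t(u)\longrightarrow u\int_0^\infty e^{-uy}\tilde B(y)\,dy=\int_{[0,\,\infty)}e^{-uy}\,d\tilde B(y),$$
the last equality being integration by parts (justified because $e^{-uy}\tilde B(y)\to 0$ a.s.). Because the same $\tilde B$ drives every coordinate, this argument runs simultaneously for any tuple $(u_1,\dots,u_m)$, giving joint convergence to $\bigl(\int_{[0,\infty)}e^{-u_iy}d\tilde B(y)\bigr)_{1\leq i\leq m}$; the limiting covariance $(u_i+u_j)^{-1}$ is confirmed by $\lfloor j(t)\rfloor/(\lfloor j(t)u_i\rfloor+\lfloor j(t)u_j\rfloor-1)\to(u_i+u_j)^{-1}$.

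Full weak convergence in the $J_1$-topology on $D(0,\infty)$ reduces, since the candidate limit is a.s.\ continuous on $(0,\infty)$, to f.d.d.\ convergence plus tightness on each $[\delta,L]\subset(0,\infty)$. Tightness can be obtained either from a Kolmogorov--Chentsov-type estimate $\me(X_t(u)-X_t(u'))^2\leq C_{\delta,L}|u-u'|$ uniform in $t$ (derived by differentiating the integral representation in $u$ and invoking the second-moment structure of $N^\ast$), or, more directly, by strengthening the Skorokhod-coupling argument above to uniform-in-$u$ convergence on compacts of $(0,\infty)$ using the domination $|(1-y/j(t))^{k-2}|\leq e^{-\delta y/2}$ valid for all $u\geq\delta$ and $t$ large. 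The principal obstacle is the mismatch of scales: $(t-y)^{k-1}$ is concentrated on an interval of length $\asymp t/j(t)$ whereas the integrator $N^\ast(\cdot)-(\cdot)/{\tt m}$ has its own fluctuation scale $\sqrt{t}$, so no direct continuous-mapping argument applies to the bare form $\int(1-z)^{k-1}dW_t(z)$. The two-stage substitution $y=tz=\tau_t\cdot(j(t)z)$ is precisely what brings the kernel and the integrator into a common Donsker scaling window, after which the limit is identified as an integral functional of a Brownian motion.
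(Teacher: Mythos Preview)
Your approach is essentially the paper's: rewrite via integration by parts as $\int_0^1(1-z)^{k-1}\,dW_t(z)$, rescale $z=y/j(t)$ to put the integrator on the Donsker scale $\tau_t=t/j(t)$, use Skorokhod coupling on $[0,T]$, and truncate. One small correction: the tail over $[L,j(t)]$ is not controlled by $\me\sup_{y\le L}\tilde W_t(y)^2$ (a bound on $[0,L]$ says nothing about $[L,j(t)]$) but by the \emph{pointwise} first-moment estimate $\me|\tilde W_t(y)|\le Cy^{1/2}$, equivalently $\me|N^\ast(s)-s/{\tt m}|\sim ({\tt s}^2{\tt m}^{-3})^{1/2}\me|B(1)|\,s^{1/2}$ (the paper's Lemma 4.1), which gives $\int_L^{j(t)}(1-y/j)^{k-2}\,\me|\tilde W_t(y)|\,dy\to u\int_L^\infty e^{-uy}y^{1/2}\,dy\to 0$ as $L\to\infty$. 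For tightness the paper takes your second route and makes it explicit via the kernel-difference bound
\[
\bigg|\frac{\lfloor ju\rfloor-1}{j}\Big(1-\frac{y}{j}\Big)^{\lfloor ju\rfloor-2}-\frac{\lfloor jv\rfloor-1}{j}\Big(1-\frac{y}{j}\Big)^{\lfloor jv\rfloor-2}\bigg|\le C|u-v|\Big(1-\frac{y}{j}\Big)^{\lfloor ja\rfloor-2}(1+y),\quad u,v\in[a,b],
\]
and then invokes the analogue of the f.d.d.\ argument with the dominating kernel $(1-y/j)^{\lfloor ja\rfloor-2}(1+y)$.
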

\begin{rem}
We note in passing that at the expense of a more technical argument the assumption $\me\eta<\infty$ in Theorem \ref{main5} can be replaced with $\me \eta^a<\infty$ for some $a>1/2$.
\end{rem}

The remainder of the paper is structured as follows. Some technical results are stated in Section \ref{auxstat} and then proved in Section \ref{auxpro}. Proposition \ref{1formain100} and Theorem \ref{main4} are deduced from these in Section \ref{auxded}. Section \ref{sect:flt} is devoted to proving Theorem \ref{main5}. The proofs of our main results, Theorems \ref{main3} and \ref{main100}, are given in Section \ref{sect:main}.

\section{Auxiliary tools}

\subsection{Results}\label{auxstat}

Lorden's inequality which is a classical result of renewal theory tells us that
\begin{equation}\label{lord}
U(t)-{\tt m}^{-1}t \leq c_0,\quad t\geq 0
\end{equation}
for appropriate constant $c_0>0$ whenever $\E\xi^2<\infty$. If the distribution of $\xi$ is nonlattice, one can take $c_0={\rm Var}\,\xi/\E\xi^2$, whereas if the distribution of
$\xi$ is $\delta$-lattice, \eqref{lord} holds with $c_0=2\delta/{\tt m}+{\rm Var}\,\xi/\E\xi^2$. Since $V(t)\leq U(t)$ for $t\geq 0$ we infer
\begin{equation}\label{lord1}
V(t)-{\tt m}^{-1}t \leq c_0,\quad t\geq 0.
\end{equation}
On the other hand, assuming that $\me\eta<\infty$,
\begin{eqnarray*}
V(t)-{\tt m}^{-1}t&=&\int_{[0,\,t]}(U(t-y)-{\tt m}^{-1}(t-y)){\rm d}G(y)\\&\hphantom{==}-& {\tt m}^{-1} \int_0^t (1-G(y)){\rm d}y\geq-{\tt m}^{-1}\int_0^t (1-G(y)){\rm d}y\geq
-{\tt m}^{-1}\me\eta
\end{eqnarray*}
having utilized $U(t)\geq {\tt m}^{-1}t$ for $t\geq 0$ which is a consequence of Wald's identity $t\leq \E S_{\nu(t)}={\tt m} U(t)$, where $\nu(t):=\inf\{k\in\mn: S_k>t\}$ for $t\geq 0$. Thus, we have shown that
\begin{equation}\label{lord2}
|V(t)-{\tt m}^{-1}t|\leq c,\quad t\geq 0
\end{equation}
where $c=\max(c_0, {\tt m}^{-1}\me\eta)$.

The following technical result is very important. It provides a two terms expansion of $V_j$ and, as such, enables us to reduce, in a sense, a more general case to the case \eqref{equa}.
\begin{assertion}\label{aux5000}
Under the assumptions $\me\xi^2<\infty$ and $\me\eta<\infty$,
\begin{equation}\label{ineq2}
\bigg|V_j(t)-\frac{t^j}{j!{\tt m}^j}\bigg|\leq \sum_{i=0}^{j-1}\binom{j}{i}\frac{c^{j-i}t^i}{i!{\tt m}^i},\quad
j\in\mn,~t\geq 0,
\end{equation}
where ${\tt m}=\me\xi<\infty$. Also, for each $t>0$ and positive integer $j\leq (t/(2c{\tt m}))^{1/2}$,
\begin{equation}\label{basic123}
\sum_{i=0}^{j-1}\binom{j}{i}\frac{c^{j-i}t^i}{i!{\tt m}^i}\leq \frac{2c jt^{j-1}}{(j-1)!{\tt m}^{j-1}}.
\end{equation}
In particular, as $t\to\infty$,
\begin{equation}\label{basic}
V_j(t)=\frac{t^j}{j!{\tt m}^j}+O\Big(\frac{jt^{j-1}}{(j-1)!{\tt m}^{j-1}}\Big)
\end{equation}
and
\begin{equation}\label{basic1}
\frac{jt^{j-1}}{(j-1)!{\tt m}^{j-1}}=o\Big(\frac{t^j}{j!{\tt m}^j}\Big)
\end{equation}
provided that $j=j(t)\to\infty$ and $j(t)=o(t^{1/2})$ as $t\to\infty$.
\end{assertion}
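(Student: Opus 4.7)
The plan is to prove \eqref{ineq2} by induction on $j$, derive \eqref{basic123} by a geometric-ratio argument, and then read off \eqref{basic} and \eqref{basic1} as immediate corollaries.

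For \eqref{ineq2}, the base case $j=1$ is exactly \eqref{lord2}. For the inductive step I would write $V(y)={\tt m}^{-1}y+H(y)$ with $|H(y)|\leq c$ by \eqref{lord2}, set $\Delta_j(t):=V_j(t)-t^j/(j!{\tt m}^j)$, and start from the convolution recursion $V_j(t)=\int_{[0,\,t]}V(t-y)\,{\rm d}V_{j-1}(y)$. A Fubini computation yields $\int_{[0,\,t]}(t-y)\,{\rm d}V_{j-1}(y)=\int_0^t V_{j-1}(s)\,{\rm d}s$, which is permissible because $V_{j-1}(0)=0$ (a consequence of $\eta>0$ a.s., built into the standing assumption of positive components). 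Combining with the matching identity $t^j/(j!{\tt m}^j)={\tt m}^{-1}\int_0^t s^{j-1}/((j-1)!{\tt m}^{j-1})\,{\rm d}s$ yields the decomposition
\begin{equation*}
\Delta_j(t) = {\tt m}^{-1}\int_0^t \Delta_{j-1}(s)\,{\rm d}s + \int_{[0,\,t]} H(t-y)\,{\rm d}V_{j-1}(y).
\end{equation*}
Bounding the second summand by $cV_{j-1}(t)\leq ct^{j-1}/((j-1)!{\tt m}^{j-1})+c|\Delta_{j-1}(t)|$, applying the inductive hypothesis termwise to both pieces, reindexing, and finally invoking Pascal's identity $\binom{j-1}{i-1}+\binom{j-1}{i}=\binom{j}{i}$ merges the two contributions into $\sum_{i=0}^{j-1}\binom{j}{i}c^{j-i}t^i/(i!{\tt m}^i)$.

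For \eqref{basic123}, let $a_i:=\binom{j}{i}c^{j-i}t^i/(i!{\tt m}^i)$ and note that
\begin{equation*}
\frac{a_{i+1}}{a_i}=\frac{(j-i)\,t}{(i+1)^2\,c\,{\tt m}}.
\end{equation*}
This ratio is decreasing in $i$, so its minimum over $i\in\{0,\ldots,j-2\}$ is attained at $i=j-2$ and equals $2t/((j-1)^2 c{\tt m})$; the hypothesis $j\leq (t/(2c{\tt m}))^{1/2}$ comfortably ensures this minimum is at least $2$. A geometric-series bound then gives $\sum_{i=0}^{j-1}a_i\leq a_{j-1}\sum_{k\geq 0}2^{-k}=2a_{j-1}=2jct^{j-1}/((j-1)!{\tt m}^{j-1})$.

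Parts \eqref{basic} and \eqref{basic1} follow at once. For $t$ large enough that $j(t)\leq (t/(2c{\tt m}))^{1/2}$, combining \eqref{ineq2} and \eqref{basic123} delivers the $O$-error in \eqref{basic}; and the ratio of this error to the leading term equals $j^2\,{\tt m}/t$, which tends to $0$ under $j(t)=o(t^{1/2})$, yielding \eqref{basic1}. I expect the one genuinely delicate point to be the Pascal-identity bookkeeping in the inductive step of \eqref{ineq2}; the rest is routine verification.
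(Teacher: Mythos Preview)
Your proof is correct and follows the same overall architecture as the paper (induction on $j$ for \eqref{ineq2}, a geometric bound for \eqref{basic123}, then immediate corollaries), but with two streamlining differences worth noting.

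For \eqref{ineq2}, the paper first proves an auxiliary inequality $\big|\int_{[0,t]}(t-y)^\ell\,{\rm d}V(y)-t^{\ell+1}/((\ell+1){\tt m})\big|\leq ct^\ell$ by a separate induction, and then runs the main induction via $V_{k+1}(t)=\int_{[0,t]}V_k(t-y)\,{\rm d}V(y)$, feeding the auxiliary bound into the second error term. Your decomposition $V={\tt m}^{-1}y+H$ with $|H|\leq c$, combined with the integration-by-parts identity $\int_{[0,t]}(t-y)\,{\rm d}V_{j-1}(y)=\int_0^t V_{j-1}(s)\,{\rm d}s$, bypasses the auxiliary lemma entirely and leads to the same Pascal-identity merge in one pass; this is a little more economical. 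For \eqref{basic123}, the paper bounds via the crude estimate $\binom{j}{i}\leq j!/i!\leq j^{j-i}$ and sums a geometric series in $c{\tt m}j^2/t$; your ratio test $a_{i+1}/a_i=(j-i)t/((i+1)^2 c{\tt m})\geq 2$ is sharper and cleaner (in fact the minimum ratio is at least $4$ under the stated hypothesis, so your factor-$2$ bound has slack). The remaining steps are identical.
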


Recalling \eqref{equsta} we infer
\begin{equation}\label{aux987}
{\tt m}^{-1}t-{\tt m}^{-1}\me \eta\leq \bar V(t)={\tt m}^{-1}\int_0^t G(y){\rm d}y\leq {\tt m}^{-1}t,\quad t\geq 0.
\end{equation}
Thus,
\begin{equation}\label{aux567}
\bar V_j(t)\leq \frac{t^j}{j! {\tt m}^j}, j\in\mn,~ t\geq 0.
\end{equation}
The proof of Proposition \ref{aux5000} given below is only based on the two facts: 1) \eqref{lord2} and 2) $V_j$ is the $j$-fold convolution of $V$ with itself. Noting that $\bar V_j$ is the $j$-fold convolution of $\bar V$ with itself and invoking the left-hand inequality in \eqref{aux987} instead of \eqref{lord2} we obtain a counterpart of \eqref{ineq2}:
\begin{equation}\label{ineq2222}
0 \leq \frac{t^j}{j!{\tt m}^j}- \bar V_j(t) \leq \sum_{i=0}^{j-1}\binom{j}{i}\frac{t^i \bar c^{j-i}}{i!{\tt m}^i},\quad
j\in\mn,~t\geq 0,
\end{equation}
where $\bar c:={\tt m}^{-1}\me \eta$.

Lemmas \ref{aux50000} and \ref{mom_asy} are needed for the proofs of Proposition \ref{1formain100} and Theorem \ref{main4}, respectively.
\begin{lemma}\label{aux50000}
Assume that the distribution of $\xi$ is nonlattice, that $\me\xi^2<\infty$ and $\me\eta<\infty$.  Let $j=j(t)\to\infty$ through integers and $j(t)=o(t^{1/3})$ as
$t\to\infty$. Then
\begin{equation}\label{121}
\lim_{t\to\infty}\frac{j^{1/2}(j-1)!}{{\tt m}^{-j}t^{j-1/2}}\me |N_j(t)-\bar N_j(t)|=0,
\end{equation}
where ${\tt m}=\me\xi<\infty$, and
\begin{equation}\label{122}
\lim_{t\to\infty}\frac{j^{1/2}(j-1)!}{{\tt m}^{-j}t^{j-1/2}}\int_{[0,\,t]}\Big(\frac{(t-y)^{j-1}}{(j-1)!{\tt m}^{j-1}}-\bar V_{j-1}(t-y)\Big){\rm d}\bar V(y)=0.
\end{equation}
\end{lemma}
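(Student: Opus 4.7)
The plan is to prove \eqref{121} via a pathwise coupling forcing $\bar N_j\leq N_j$ almost surely, and then to reduce both \eqref{121} and \eqref{122} to a single deterministic estimate drawn from Proposition \ref{aux5000} and its companion \eqref{ineq2222}.

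For \eqref{121}, I would realize both branching random walks on the common index set $\mathbb{V}=\cup_{n\in\mn_0}\mn^n$. To every vertex $v\in\mathbb V$ attach independent data: a local sequence $(\xi^{(v)}_i,\eta^{(v)}_i)_{i\in\mn}$ generating displacements $T^{(v)}_r=\xi^{(v)}_1+\cdots+\xi^{(v)}_{r-1}+\eta^{(v)}_r$, together with an independent random variable $\bar S_0^{(v)}$ distributed as the stationary overshoot. Use $T^{(v)}$ for the offspring displacements of $v$ in the $N$-walk and $\bar T^{(v)}_r:=T^{(v)}_r+\bar S_0^{(v)}$ in the $\bar N$-walk. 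Since $\bar S_0^{(v)}$ is independent of $T^{(v)}$ with the correct law, each $\bar T^{(v)}$ is a copy of $\bar T$ and the sequences at distinct vertices remain independent, so $N_j$ and $\bar N_j$ retain the prescribed marginals. Writing $v_{<i}:=(v_1,\ldots,v_{i-1})$ (with $v_{<1}:=\varnothing$), the positions of a generation-$j$ individual $v$ satisfy $L_{\bar N}(v)-L_N(v)=\sum_{i=1}^{j}\bar S_0^{(v_{<i})}\geq0$, so $\bar N_j(t)\leq N_j(t)$ a.s.\ and the absolute value collapses: $\me|N_j(t)-\bar N_j(t)|=V_j(t)-\bar V_j(t)$.

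Combining \eqref{ineq2} with \eqref{ineq2222} and invoking \eqref{basic123} with $\max(c,\bar c)$ in place of $c$ gives $0\leq V_j(t)-\bar V_j(t)\leq C\,j\,t^{j-1}/((j-1)!\,{\tt m}^{j-1})$ whenever $j\leq (t/(2\max(c,\bar c){\tt m}))^{1/2}$, which is automatic eventually under $j(t)=o(t^{1/3})$. Multiplying by $j^{1/2}(j-1)!/({\tt m}^{-j}t^{j-1/2})$ yields $O(j^{3/2}/t^{1/2})\to 0$, which is \eqref{121}. For \eqref{122}, the integrand $(t-y)^{j-1}/((j-1)!{\tt m}^{j-1})-\bar V_{j-1}(t-y)$ is nonnegative by \eqref{aux567} and, applying \eqref{ineq2222} and \eqref{basic123} at index $j-1$, is bounded above by $2\bar c(j-1)(t-y)^{j-2}/((j-2)!{\tt m}^{j-2})$ throughout the bulk of $[0,t]$. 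Integrating against $d\bar V(y)\leq {\tt m}^{-1}\,dy$ produces a bound of order $j\,t^{j-1}/((j-1)!\,{\tt m}^{j-1})$, which the same normalization turns into $O(j^{3/2}/t^{1/2})\to 0$.

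The main obstacle is recognizing that the coupling in \eqref{121} is genuinely needed: a variance-based proof via Cauchy--Schwarz falls just short. Indeed, $\mathrm{Var}\,N_j$ is of critical order ${\tt s}^2 t^{2j-1}/({\tt m}^{2j+1}j(j-1)!^2)$, and were $N_j$ and $\bar N_j$ independent, the estimate $\me|N_j-\bar N_j|\leq|V_j-\bar V_j|+(\mathrm{Var}\,N_j+\mathrm{Var}\,\bar N_j)^{1/2}$ would remain of order $1$ after the lemma's normalization rather than tending to zero. Arranging $\bar N_j\leq N_j$ pathwise eliminates the absolute value and reduces the problem to the deterministic expansion of $V_j$ and $\bar V_j$ around $t^j/(j!\,{\tt m}^j)$.
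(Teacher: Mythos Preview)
Your argument is correct and follows the paper's approach: the same coupling (stated tersely in the paper as ``since $T_r\leq\bar T_r$ a.s., $N_j(t)\geq\bar N_j(t)$ a.s.'') followed by the deterministic estimates \eqref{ineq2}, \eqref{ineq2222}, \eqref{basic123}; your explicit construction of the coupling on $\mathbb V$ is a welcome clarification of a step the paper leaves implicit. For \eqref{122} the paper integrates the full sum from \eqref{ineq2222} against $d\bar V$ first and bounds the resulting sum afterward, rather than applying \eqref{basic123} pointwise---this sidesteps the ``bulk'' qualifier you left dangling, though the omitted tail $y\in[t-2\bar c{\tt m}(j-1)^2,t]$ is easily seen to be negligible under $j=o(t^{1/3})$.
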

\begin{lemma}\label{mom_asy}
(a) Assume that \eqref{equa} holds and let $j=j(t)\to\infty$ through integers and $j(t)=o(t)$ as
$t\to\infty$.
Then
\begin{equation}\label{121120}
\me \Big(N_j(t)-\sum_{r\geq 1} V_{j-1}(t-T_r)\1_{\{T_r\leq t\}}\Big)^2~=~ O\Big(\frac{t^{2j-2}}{((j-1)!)^2}\Big),\quad t\to\infty.
\end{equation}
In particular,
\begin{equation}\label{121121}
\lim_{t\to\infty}\frac{j ((j-1)!)^2}{t^{2j-1}}\me \Big(N_j(t)-\sum_{r\geq 1} V_{j-1}(t-T_r)\1_{\{T_r\leq t\}}\Big)^2=0.
\end{equation}
(b) Assume that the distribution of $\xi$ is nonlattice, that $\me\xi^2<\infty$ and $\me\eta<\infty$. Let $j=j(t)\to\infty$ through integers and $j(t)=o(t^{1/2})$ as
$t\to\infty$. Then
\begin{equation}\label{121120a}
\me \Big(\bar N_j(t)-\sum_{r\geq 1} \bar V_{j-1}(t-\bar T_r)\1_{\{\bar T_r\leq t\}}\Big)^2~=~ O\Big(\frac{jt^{2j-2}}{((j-1)!)^2{\tt m}^{2j-2}}\Big),\quad t\to\infty.
\end{equation}
In particular,
\begin{equation}\label{121121a}
\lim_{t\to\infty}\frac{j ((j-1)!)^2}{{\tt m}^{-2j} t^{2j-1}}\me \Big(\bar N_j(t)-\sum_{r\geq 1}\bar V_{j-1}(t-\bar T_r)\1_{\{\bar T_r\leq t\}}\Big)^2=0.
\end{equation}
\end{lemma}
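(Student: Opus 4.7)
The plan is to start from the branching decomposition \eqref{basic1232}, which gives
$$N^\ast_j(t) - \sum_{r\geq 1} V^\ast_{j-1}(t-T^\ast_r)\1_{\{T^\ast_r \leq t\}} = \sum_{r\geq 1} \bigl(N^{(\ast,r)}_{j-1}(t-T^\ast_r) - V^\ast_{j-1}(t-T^\ast_r)\bigr)\1_{\{T^\ast_r \leq t\}}.$$
By the branching property, conditionally on the first-generation positions $T^\ast$ the summands on the right are independent and centered; writing $\sigma^{\ast 2}_j(t) := \mathrm{Var}(N^\ast_j(t))$, this yields the key identity
$$\me\Big(N^\ast_j(t) - \sum_r V^\ast_{j-1}(t-T^\ast_r)\1_{\{T^\ast_r \leq t\}}\Big)^2 = \int_{[0,t]} \sigma^{\ast 2}_{j-1}(t-y)\, dV^\ast(y),$$
so that both (a) and (b) reduce to a suitable upper bound on $\sigma^{\ast 2}_{j-1}$.

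The strategy is induction on $j$, using the total-variance decomposition $\sigma^{\ast 2}_j = R^\ast_j + S^\ast_j$, where $R^\ast_j$ is the left-hand side above and
$$S^\ast_j(t) := \mathrm{Var}\Big(\int_{[0,t]} V^\ast_{j-1}(t-y)\, dN^\ast(y)\Big).$$
The inductive target is a sharp bound of the form $\sigma^{\ast 2}_j(t) \leq C\,t^{2j-1}/(j\,((j-1)!)^2\,{\tt m}^{2j+1})$; the factor $1/j$ is crucial, for plugging this into the key identity and using \eqref{lord2} (respectively \eqref{aux987}) to control $dV^\ast(y)\leq {\tt m}^{-1}\,dy$ gives
$$R^\ast_j(t) = O\Big(\frac{t^{2j-2}}{((j-1)!)^2 {\tt m}^{2j}}\Big),$$
thanks to the collapse $(2j-2)(j-1)((j-2)!)^2 = 2((j-1)!)^2$ that removes an extra $j$. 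For $S^\ast_j$ I would use integration by parts inside $\int_0^t V^\ast_{j-1}(t-y)\, dN^\ast(y)$ and then expand the variance as a double integral against the covariance kernel $\mathrm{Cov}(N^\ast(y), N^\ast(y'))$; renewal-theoretic estimates coming from Lorden's inequality produce $|\mathrm{Cov}(N^\ast(y), N^\ast(y'))| \leq C\min(y,y')$, and the explicit evaluation $\int_0^t\!\!\int_0^t (t-y)^{j-2}(t-y')^{j-2}\min(y,y')\,dy\,dy' = t^{2j-1}/((j-1)^2(2j-1))$ produces $S^\ast_j(t) = O(t^{2j-1}/(j((j-1)!)^2 {\tt m}^{2j+1}))$, matching the inductive target.

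Part (a) follows at once because $V_{j-1}(t-y) = (t-y)^{j-1}/(j-1)!$ is exactly polynomial in the GEM$(0,1)$ case. For part (b), $\bar V_{j-1}$ is polynomial only to leading order, so I would invoke the two-term expansion \eqref{ineq2222}: its principal polynomial part leads to the same double-integral analysis as in (a), while the remainder contributes an error of relative size $O(j/t)$, which is precisely the source of the extra $j$ factor in the bound of (b) and the reason for the more restrictive hypothesis $j=o(t^{1/2})$. The main obstacle is securing the $1/j$ gain in the bound on $S^\ast_j$: a crude Cauchy-Schwarz estimate $|\mathrm{Cov}(N^\ast(y), N^\ast(y'))| \leq C\sqrt{yy'}$ would yield only $O(t^{2j-1}/((j-1)!)^2)$, which upon insertion into the induction would force $j(t) = o(t^{1/2})$ even in (a). The sharper $\min(y,y')$ bound relies on the near-stationary increment structure of the perturbed renewal counting process $N^\ast$, and is what ultimately permits the range $j(t) = o(t)$ in (a).
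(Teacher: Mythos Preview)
Your opening moves coincide with the paper's: both start from the branching decomposition \eqref{basic1232}, derive the key identity $R^\ast_j(t)=\int_{[0,t]}\sigma^{\ast 2}_{j-1}(t-y)\,{\rm d}V^\ast(y)$, and split $\sigma^{\ast 2}_j=R^\ast_j+S^\ast_j$ via total variance. The genuine difference is in bounding $S^\ast_j$ (the paper's $I^\ast_j$): you integrate by parts and pass to the covariance kernel of $N^\ast$, whereas the paper expands $\me\bigl(\sum_r V^\ast_{j-1}(t-T^\ast_r)\1_{\{T^\ast_r\le t\}}\bigr)^2$ directly, dominates $T^\ast_r$ by $S^\ast_{r-1}$, and writes the cross terms through the renewal function $\tilde U$.

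There is a real gap in your induction. The target $\sigma^{\ast 2}_k(s)\le C\,s^{2k-1}/\bigl(k((k-1)!)^2{\tt m}^{2k+1}\bigr)$ with $C$ independent of $k$ cannot hold for all $s\ge0$: since $N^\ast_k(s)$ is nonnegative integer-valued, $\sigma^{\ast 2}_k(s)\ge V^\ast_k(s)(1-V^\ast_k(s))$, and in part~(a) this gives $\sigma^{\ast 2}_k(s)\gtrsim s^k/k!$ for small $s$, while your target is of order $s^{2k-1}/((k-1)!)^2$; the ratio is $(k-1)!/s^{k-1}$, unbounded in $k$ for every fixed $s$. But the identity $R^\ast_j(t)=\int_{[0,t]}\sigma^{\ast 2}_{j-1}(t-y)\,{\rm d}V^\ast(y)$ consumes the inductive hypothesis at \emph{every} $s=t-y\in[0,t]$, including small $s$ with $j-1$ large, so the single-term induction cannot close. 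To repair it you must carry all the lower-order polynomial terms through the recursion, and this is exactly what the paper does: it proves the pointwise bound
\[
D_j(t)\le\sum_{i=0}^{j-1}\frac{(2i)!\,t^{j-1+i}}{(i!)^2(j-1+i)!}+5\sum_{i=0}^{j-1}\frac{(2i)!\,t^{j+i}}{(i!)^2(j+i)!}
\]
valid for all $t\ge0$ and all $j$, and only then isolates the dominant $(i=j-2)$ term of $\int_0^t D_{j-1}(y)\,{\rm d}y$ via a Stirling-type analysis that uses $j(t)=o(t)$.

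A secondary point: the bound $|\mathrm{Cov}(N^\ast(y),N^\ast(y'))|\le C\min(y,y')$ is not as immediate as you suggest. For the stationary version $\bar N$ it is plausible, but in part~(a) you need it for the nonstationary perturbed-walk counting process $N$, where $\xi_r$ and $\eta_r$ are dependent (both functions of the same $W_r$); controlling $\mathrm{Cov}(N(y),N(y')-N(y))$ uniformly in $y\le y'$ requires a genuine argument that Lorden's inequality alone does not supply. The paper avoids the covariance kernel altogether by working directly with the renewal sums.
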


\subsection{Proofs of auxiliary results}\label{auxpro}

\begin{proof}[Proof of Proposition \ref{aux5000}]
By using the mathematical induction we first show that
\begin{equation}\label{ineq1}
\bigg|\int_{[0,\,t]}(t-y)^\ell{\rm d}V(y)-\frac{t^{\ell+1}}{(\ell+1){\tt m}}\bigg|\leq ct^\ell,\quad \ell\in\mn_0.
\end{equation}
When $\ell=0$, \eqref{ineq1} coincides with \eqref{lord2}. Assuming that \eqref{ineq1} holds for $\ell=k$ we
obtain
\begin{multline*}
\bigg|\int_{[0,\,t]}(t-y)^{k+1}{\rm d}V(y)-\frac{t^{k+2}}{(k+2){\tt m}}\bigg|=\bigg|(k+1)\int_0^t\bigg(\int_{[0,\,s]}(s-y)^k{\rm
d}V(y)-\frac{s^{k+1}}{(k+1){\tt m}}\bigg){\rm d}s\bigg|\\ \leq (k+1)\int_0^t cs^k {\rm d}s=ct^{k+1}
\end{multline*}
which completes the proof of \eqref{ineq1}.

To prove \eqref{ineq2} we once again use the mathematical induction. When $j=1$, \eqref{ineq2} coincides with \eqref{lord2}. Assuming that \eqref{ineq2} holds for $j=k$ and appealing to
\eqref{ineq1} we infer
\begin{eqnarray*}
&&\bigg|V_{k+1}(t)-\frac{t^{k+1}}{(k+1)!{\tt m}^{k+1}}\bigg|\\&\leq
&\int_{[0,\,t]}\bigg|V_k(t-y)-\frac{(t-y)^k}{k!{\tt m}^k}\bigg|{\rm
d}V(y)+\frac{1}{k!{\tt m}^k}\bigg|\int_{[0,\,t]}(t-y)^k{\rm d}V(y)-\frac{t^{k+1}}{(k+1){\tt m}}\bigg|\\&\leq&
\int_{[0,\,t]}\sum_{i=0}^{k-1}\binom{k}{i}\frac{c^{k-i}}{i!{\tt m}^i}(t-y)^i{\rm d}V(y)+\frac{ct^k}{k!{\tt m}^k}\\&\leq&
\sum_{i=0}^{k-1}\binom{k}{i}\frac{c^{k+1-i}t^i}{i!{\tt m}^i}+\sum_{i=0}^{k-1}\binom{k}{i}\frac{c^{k-i}t^{i+1}}{(i+1)!{\tt m}^{i+1}}+\frac{ct^k}{k!{\tt m}^k}\\&\leq&
c^{k+1}+\sum_{i=1}^{k-1}\bigg(\binom{k}{i}+\binom{k}{i-1}\bigg)\frac{c^{k+1-i}t^i}{i!{\tt m}^i}+\frac{(k+1)ct^k}{k!{\tt m}^k}=
\sum_{i=0}^k\binom{k+1}{i}\frac{c^{k+1-i}t^i}{i!{\tt m}^i}.
\end{eqnarray*}
Inequality \eqref{basic123} is a consequence of
\begin{multline*}
\frac{(j-1)!{\tt m}^{j-1}}{j t^{j-1}}\sum_{i=0}^{j-2}\binom{j}{i}\frac{c^{j-i}t^i}{i!{\tt m}^i}=\frac{t}{{\tt m} j^2}\sum_{i=0}^{j-2} \frac{j!}{i!} \binom{j}{i}\Big(\frac{c{\tt m}}{t}\Big)^{j-i}\le \frac{t}{{\tt m} j^2}\sum_{i=0}^{j-2}\Big(\frac{c{\tt m} j^2}{t}\Big)^{j-i}\\ \le \frac{t}{{\tt m} j^2}\sum_{i\geq 2} \Big(\frac{c{\tt m} j^2}{t}\Big)^i=\frac{{\tt m}(c j)^2}{t}\Big(1-\Big(\frac{c{\tt m} j^2}{t}\Big)\Big)^{-1}
\end{multline*}
and the facts that $x\mapsto x/(1-x)$ is a nondecreasing function and that $j^2\leq t/(2c{\tt m})$ by assumption.
Here, we have used
\begin{equation}\label{eq:bin}
\binom{j}{i}\leq \frac{j!}{i!}\leq j^{j-i}
\end{equation}
for the first inequality. Finally, while \eqref{basic1} is immediate, relation \eqref{basic} follows from \eqref{ineq2} and \eqref{basic123}.
\end{proof}

\begin{proof}[Proof of Lemma \ref{aux50000}] We start with \eqref{121}. Since $T_r\leq \bar T_r$, $r\in\mn$ almost surely (a.s.), we infer $N_j(t)\geq \bar N_j(t)$ for all $j\in\mn$ and all $t\geq 0$ a.s. Write, for large $t$, $$\me |N_j(t)-\bar N_j(t)|=V_j(t)-\bar V_j(t)\leq \sum_{i=0}^{j-1}\binom{j}{i}\frac{(c^{j-i}+\bar c^{j-i})t^i}{i!{\tt m}^i}\leq \frac{2(c+\bar c) jt^{j-1}}{(j-1)!{\tt m}^{j-1}}$$ having utilized \eqref{ineq2} and \eqref{ineq2222} for the first inequality, and \eqref{basic123} for the second. Now \eqref{121} follows from
\begin{equation*}
\frac{j^{1/2}(j-1)!}{{\tt m}^{-j}t^{j-1/2}}\frac{jt^{j-1}}{(j-1)!{\tt m}^{j-1}}~\sim~{\tt m} \Big(\frac{j^3}{t}\Big)^{1/2}~\to~0,\quad t\to\infty
\end{equation*}
which is justified by the assumption $j(t)=o(t^{1/3})$.

We pass to \eqref{122}. Invoking first \eqref{ineq2222} and then $\bar V(t)\leq {\tt m}^{-1}t$ for $t\geq 0$ yields
\begin{multline*}
0\leq \int_{[0,\,t]}\Big(\frac{(t-y)^{j-1}}{(j-1)!{\tt m}^{j-1}}-\bar V_{j-1}(t-y)\Big){\rm d}\bar V(y)\leq \int_0^t \sum_{i=0}^{j-2}\binom{j-1}{i}\frac{\bar c^{j-1-i}y^i}{i!{\tt m}^{i+1}}{\rm d}y=\sum_{i=1}^{j-1}\binom{j-1}{i-1}\frac{\bar c^{j-i}t^i}{i!{\tt m}^i}.
\end{multline*}
Next, we intend to prove that
\begin{equation*}
{\lim\sup}_{t\to\infty}\frac{(j-2)!{\tt m}^{j-1}}{t^{j-1}}\sum_{i=1}^{j-1}\binom{j-1}{i-1}\frac{\bar c^{j-i}t^i}{i!{\tt m}^i}\leq \bar c.
\end{equation*}
Using \eqref{eq:bin} we obtain $$\frac{(j-2)!}{i!}\binom{j-1}{i-1} \le (j-2)^{j-2-i} (j-1)^{j-i}\leq j^{2(j-1-i)},$$ whence
$$\frac{(j-2)!{\tt m}^{j-1}}{t^{j-1}}\sum_{i=1}^{j-1}\binom{j-1}{i-1}\frac{\bar c^{j-i}t^i}{i!{\tt m}^i}\leq \bar c\sum_{i=1}^{j-1}\Big(\frac{\bar c{\tt m} j^2}{t}\Big)^{j-i-1}\leq \bar c\Big(1-\frac{\bar c{\tt m} j^2}{t}\Big)^{-1}~\to~\bar c,\quad t\to\infty.$$ Finally,
\begin{equation*}
\frac{j^{1/2}(j-1)!}{{\tt m}^{-j}t^{j-1/2}}\frac{t^{j-1}}{(j-2)!{\tt m}^{j-1}}~\sim~{\tt m} \Big(\frac{j^3}{t}\Big)^{1/2}~\to~0,\quad t\to\infty
\end{equation*}
because $j(t)=o(t^{1/3})$.
\end{proof}

\begin{proof}[Proof of Lemma \ref{mom_asy}]
We recall our convention about the usage of $\ast$ as a superscript: for instance, $S_j^\ast$ denotes both $S_j$ and $\bar S_j$, $N_j^\ast(t)$ denotes both $N_j(t)$ and $\bar N_j(t)$ etc.

Put $D^\ast_j(t):={\rm Var}\, N^\ast_j(t)$ for $j\in\mn$ and $t\geq 0$. Recalling \eqref{basic1232} the following two equalities are almost immediate: for $j\geq 2$ and $t\geq 0$,
\begin{eqnarray}\label{aux5}
D^\ast_j(t)&=&\me \bigg(\sum_{r\geq 1}\big(N^{(\ast,\, r)}_{j-1}(t-T^\ast_r)- V^\ast_{j-1}(t-T^\ast_r)\big)\1_{\{T^\ast_r\leq t\}}\bigg)^2\\&+& \me\bigg(\sum_{r\geq 1}V^\ast_{j-1}(t-T^\ast_r)
\1_{\{T^\ast_r\leq t\}}-V^\ast_j(t)\bigg)^2\notag
\end{eqnarray}
and
\begin{eqnarray}\label{aux10}
\me \Big(N^\ast_j(t)-\sum_{r\geq 1} V^\ast_{j-1}(t-T^\ast_r)\1_{\{T^\ast_r\leq t\}}\Big)^2&=&\me \bigg(\sum_{r\geq 1}\big(N^{(\ast,\,r)}_{j-1}(t-T^\ast_r)- V^\ast_{j-1}(t-T^\ast_r)\big)\1_{\{T^\ast_r\leq t\}}\bigg)^2\notag\\&=&
\int_{[0,\,t]}D^\ast_{j-1}(t-y){\rm d}V^\ast(y).
\end{eqnarray}

We are going to obtain a reasonably precise upper bound for $$I^\ast_j(t):=\me\bigg(\sum_{r\geq 1}V^\ast_{j-1}(t-T^\ast_r)\1_{\{T^\ast_r\leq t\}}-V^\ast_j(t)\bigg)^2,\quad j\in\mn,\ t\geq 0$$ with the convention that $V^\ast_0(t)=1$ for $t\geq 0$. Note that $D^\ast_1(t)=I^\ast_1(t)$ for $t\geq 0$. As a preparation, we note that, for $j\in\mn$ and $t\geq 0$,
\begin{eqnarray*}
&&\E \sum_{r\geq 2} \sum_{1\leq i<r}V^\ast_{j-1}(t-T^\ast_i)\1_{\{T^\ast_i\leq t\}}V^\ast_{j-1}(t-T^\ast_r)\1_{\{T^\ast_r\leq
t\}}\\&\leq& \E \sum_{r\geq 1} \sum_{0\leq i<r} V^\ast_{j-1}(t-S^\ast_i)V^\ast_{j-1}(t-S^\ast_r)\1_{\{S^\ast_r\leq
t\}}\\&=&\E \sum_{i\geq 0}V^\ast_{j-1}(t-S^\ast_i)\big(V^\ast_{j-1}(t-S^\ast_{i+1})\1_{\{S^\ast_{i+1}\leq
t\}}+V^\ast_{j-1}(t-S^\ast_{i+2})\1_{\{S^\ast_{i+2}\leq t\}}+\ldots\big)\\&=&\E
\sum_{i\geq 0} V^\ast_{j-1}(t-S^\ast_i)\1_{\{S^\ast_i\leq
t\}}\E\big(V^\ast_{j-1}(t-S^\ast_i-\xi_{i+1})\1_{\{\xi_{i+1}\leq
t-S^\ast_i\}}\\&+&V^\ast_{j-1}(t-S^\ast_i-\xi_{i+1}-\xi_{i+2})\1_{\{\xi_{i+1}+\xi_{i+2}\leq
t-S^\ast_i\}}+\ldots|S^\ast_i\big)\\&=&\E\sum_{i\geq 0}V^\ast_{j-1}(t-S^\ast_i)\int_{[0,\,t-S^\ast_i]}V^\ast_{j-1}(t-S^\ast_i-y){\rm
d}\tilde U(y) \1_{\{S^\ast_i\leq t\}},
\end{eqnarray*}
where $\tilde U(t):=\sum_{i\geq 1}\mmp\{S_i\leq t\}=U(t)-1$ for $t\geq 0$. Hence,
\begin{eqnarray}\label{259}
&&I^\ast_j(t)=\me \sum_{r\geq 1}(V^\ast_{j-1}(t-T^\ast_r))^2\1_{\{T^\ast_r\leq t\}}+2\me\sum_{r\geq 2}\sum_{1\leq i<r} V^\ast_{j-1}(t-T^\ast_i)\1_{\{T^\ast_i\leq t\}}V^\ast_{j-1}(t-T^\ast_r)\1_{\{T^\ast_r\leq t\}}\notag \\&-&(V^\ast_j(t))^2\leq \me \sum_{r\geq 0}(V^\ast_{j-1}(t-S^\ast_r))^2\1_{\{S^\ast_r\leq t\}}\notag\\&+&2\E\sum_{i\geq 0}V^\ast_{j-1}(t-S^\ast_i)\int_{[0,\,t-S^\ast_i]}V^\ast_{j-1}(t-S^\ast_i-y){\rm
d}\tilde U(y) \1_{\{S^\ast_i\leq t\}} - (V^\ast_j(t))^2.
\end{eqnarray}
From now on we treat the parts (a) and (b) separately.

\noindent {\sc Proof of part (a)}. Assume that \eqref{equa} holds. Then $\tilde U(t)=t$ for $t\geq 0$ and $V_j(t)=t^j/(j!)$ for $j\in\mn$ and $t\geq 0$ which particularly entails
$$\int_{[0,\,t-S_i]}V_{j-1}(t-S_i-y){\rm d}\tilde U(y)=\int_{[0,\,t-S_i]}V_{j-1}(t-S_i-y){\rm d}y=V_j(t-S_i)$$ and thereupon
\begin{multline*}
I_j(t)\leq V^2_{j-1}(t)+\int_0^t V^2_{j-1}(y){\rm d}y+2V_{j-1}(t)V_j(t)+ 2\int_0^t V_{j-1}(y)V_j(y){\rm d}y-V^2_j(t)\\=\frac{t^{2j-2}}{((j-1)!)^2}
+\frac{t^{2j-1}}{((j-1)!)^2}\Big(\frac{2}{j}+\frac{1}{2j-1}\Big)\leq \frac{t^{2j-2}}{((j-1)!)^2}+ \frac{5 t^{2j-1}}{((j-1)!)^2(2j-1)}.
\end{multline*}
Using the latter formula together with \eqref{aux5} and
\eqref{aux10} we have
\begin{multline*}
D_j(t)=\int_0^t D_{j-1}(y){\rm d}y+I_j(t)\leq \int_0^t D_{j-1}(y){\rm
d}y+\frac{t^{2j-2}}{((j-1)!)^2}+\frac{5t^{2j-1}}{((j-1)!)^2(2j-1)}.
\end{multline*}
This in combination with the boundary condition $D_0(t)=0$ for $t\geq 0$ gives
$$D_j(t)\leq \sum_{i=0}^{j-1}\frac{t^{j-1+i}}{(i!)^2}\frac{(2i)!}{(j-1+i)!}+ 5\sum_{i=0}^{j-1}\frac{t^{j+i}}{(i!)^2}\frac{(2i)!}{(j+i)!},\quad j\in\mn,\, t\geq 0$$
whence, by \eqref{aux10},
\begin{multline*}
\me \bigg(\sum_{r\geq
1}\big(N^{(r)}_{j-1}(t-T_r)-V_{j-1}(t-T_r)\big)\1_{\{T_r\leq t\}}\bigg)^2 =\int_0^t D_{j-1}(y){\rm
d}y\\\leq \sum_{i=0}^{j-2}\frac{t^{j-1+i}}{(i!)^2}\frac{(2i)!}{(j-1+i)!}+ 5\sum_{i=0}^{j-2}\frac{t^{j+i}}{(i!)^2}\frac{(2i)!}{(j+i)!}\\\leq
\Big(\frac{2j-2}{t}+5\Big)\sum_{i=0}^{j-2}\frac{t^{j+i}}{(i!)^2}\frac{(2i)!}{(j+i)!},\quad
j\geq 2,\, t\geq 0.
\end{multline*}

Recalling that $j=j(t)=o(t)$ as $t\to\infty$ we claim that the left-hand side is asymptotic
to $5$ times the $(j-2)$th term of the last sum which is
$$\frac{t^{2j-2}}{((j-2)!)^2}\frac{(2j-4)!}{(2j-2)!}~\sim~\frac{1}{4}\frac{t^{2j-2}}{((j-1)!)^2},\quad
t\to\infty.$$ To prove this, it suffices to show that
$$\lim_{t\to\infty} \sum_{i=0}^{j-3}\frac{A(i,j,t)}{t^{j-i-2}}=0,$$
where
$$A(i,j,t):=\frac{(j!)^2 (2i)!}{(i!)^2 (j+i)! j^2}.$$ Using the inequality
\begin{equation*}
(2\pi n)^{1/2}(ne^{-1})^n \leq n!\leq e(2\pi n)^{1/2}(ne^{-1})^n
\quad n\in\mn
\end{equation*}
which is a consequence of the Stirling formula in the form
\begin{equation*}
n!= (2\pi n)^{1/2}(ne^{-1})^n e^{\theta_n/(12 n)}, n\in\mn,
\end{equation*}
where $\theta_n\in (0,1)$, we obtain
\begin{equation}\label{aux7}
\frac{1}{2^{1/2}e} A(i,j,t)\leq
\frac{4^i}{i^{1/2}}\frac{j^{2j-1}}{(j+i)^{j+i+1/2}e^{j-i-2}}\leq
4^i j^{1/2}\Big(\frac{j}{e}\Big)^{j-i-2}.
\end{equation}
This yields
$$\frac{1}{2^{1/2}e}\sum_{i=0}^{\lfloor j/2\rfloor -1}\frac{A(i,j,t)}{t^{j-i-2}}\leq
j^{1/2}\sum_{i=j-\lfloor j/2\rfloor-1}^{j-3}\Big(\frac{4j}{et}\Big)^i\leq
j^{1/2}\Big(\frac{4j}{et}\Big)^{j-\lfloor j/2\rfloor-1}\Big(1-\frac{4j}{et}\Big)^{-1}$$
having utilized $4^i\leq 4^{j-i-2}$ which holds for $0\leq i\leq \lfloor j/2\rfloor-1$. The right-hand side goes to zero as $t\to\infty$.
Another appeal to \eqref{aux7} gives
$$\frac{1}{2^{1/2}e}\sum_{i=\lfloor j/2\rfloor}^{j-3}\frac{A(i,j,t)}{t^{j-i-2}}\leq j^{1/2}
\Big(\frac{j}{et}\Big)^{j-[j/2]-2}\sum_{i=[j/2]}^{j-3}4^i\leq
\frac{1}{3} j^{1/2}4^{j-2}\Big(\frac{j}{et}\Big)^{j-\lfloor j/2\rfloor-2}.$$
The right-hand side converges to zero as $t\to\infty$. The proof of \eqref{121120} is complete, and \eqref{121121} is an immediate consequence.

\noindent {\sc Proof of part (b)}. In the present setting \eqref{259} reads
$$\bar I_j(t)\leq {\tt m}^{-1}\int_0^t \bar V^2_{j-1}(y){\rm d}y+2{\tt m}^{-1}\int_0^t \bar V_{j-1}(x)\int_{[0,\,x]}\bar V_{j-1}(x-y){\rm
d}\tilde U(y){\rm d}x- \bar V^2_j(t).$$ Here, we have used $\sum_{r\geq 0}\mmp\{\bar S_r\leq t\}={\tt m}^{-1}t$, $t\geq 0$. In the subsequent proof we shall repeatedly use \eqref{aux567}, that is, $$\bar V_{j-1}(t)\leq \frac{t^{j-1}}{(j-1)!{\tt m}^{j-1}},\quad j\geq 2,~ t\geq 0.$$ In view of \eqref{ineq2222}, for $t\geq 0$,
\begin{equation*}
\bar V^2_j(t)\geq \Big(\frac{t^j}{j!{\tt m}^j}-\sum_{i=0}^{j-1}\binom{j}{i}\frac{\bar c^{j-i}t^i}{i!{\tt m}^i}\Big)^2\geq \frac{t^{2j}}{(j!)^2 {\tt m}^{2j}}-2\frac{t^j}{j!{\tt m}^j}\sum_{i=0}^{j-1}\binom{j}{i}\frac{\bar c^{j-i}t^i}{i!{\tt m}^i}.
\end{equation*}
By the argument leading to \eqref{ineq1} but starting with $$|\tilde U(t)-{\tt m}^{-1}t|\leq \tilde c:=c_0+1$$ rather than \eqref{lord2} we conclude that
$$\int_{[0,\,x]}\bar V_{j-1}(x-y) {\rm d}\tilde U(y)\leq \int_{[0,\,x]}\frac{(x-y)^{j-1}}{(j-1)!{\tt m}^{j-1}}{\rm d}\tilde U(y)\leq \frac{x^j}{j!{\tt m}^j}+\frac{\tilde cx^{j-1}}{(j-1)!{\tt m}^{j-1}},$$ whence
$$2{\tt m}^{-1}\int_0^t \bar V_{j-1}(x)\int_{[0,\,x]}\bar V_{j-1}(x-y){\rm
d}\tilde U(y){\rm d}x\leq \frac{t^{2j}}{(j!)^2 {\tt m}^{2j}}+\frac{2\tilde c t^{2j-1}}{((j-1)!)^2 (2j-1){\tt m}^{2j-1}}.$$ Collecting fragments together we infer
$$\bar I_j(t)\leq  \frac{(1+2\tilde c) t^{2j-1}}{((j-1)!)^2 (2j-1){\tt m}^{2j-1}}+ 2\frac{t^j}{j!{\tt m}^j}\sum_{i=0}^{j-1}\binom{j}{i}\frac{\bar c^{j-i}t^i}{i!{\tt m}^i},\quad t\geq 0.$$

We claim that $(\bar D_j)_{j\in\mn_0}$ given for $t\geq 0$ by $\bar D_0(t)=0$ and $$\bar D_j(t)=\int_{[0,\,t]}\bar D_{j-1}(t-y){\rm d}\bar V(y)+\bar I_j(t),\quad j\in\mn$$ satisfies $$\bar D_j(t)\leq (1+2\tilde c)\sum_{i=0}^{j-1}\frac{(2i)! t^{j+i}}{(i!)^2 (j+i)!{\tt m}^{j+i}}+2\sum_{i=0}^{j-1}\frac{\big(\sum_{\ell=1}^{j-i}\frac{(2i+\ell)!}{\ell!} \bar c^\ell\big) t^{j+i}}{(i!)^2(j+i)!{\tt m}^{j+i}},~t\geq 0.$$ This holds when $j=1$ because $\bar D_1(t)=\bar I_1(t)$. Assuming that it holds for $j=k-1$ we obtain
\begin{eqnarray*}
\bar D_k(t) &\leq& \int_{[0,\,t]}\Big((1+2\tilde c)\sum_{i=0}^{k-2}\frac{(2i)! (t-y)^{k-1+i}}{(i!)^2 (k-1+i)!{\tt m}^{k-1+i}}\\&+& 2\sum_{i=0}^{k-2}\frac{\big(\sum_{\ell=1}^{k-1-i}\frac{(2i+\ell)!}{\ell!} \bar c^\ell\big) (t-y)^{k-1+i}}{(i!)^2(k-1+i)!{\tt m}^{k-1+i}}\Big) {\rm d}\bar V(y)+\bar I_k(t)\\ &\leq& \int_0^t \Big((1+2\tilde c)\sum_{i=0}^{k-2}\frac{(2i)! y^{k-1+i}}{(i!)^2 (k-1+i)!{\tt m}^{k+i}}\\&+& 2\sum_{i=0}^{k-2}\frac{\big(\sum_{\ell=1}^{k-1-i}\frac{(2i+\ell)!}{\ell!} \bar c^\ell\big)y^{k-1+i}}{(i!)^2(k-1+i)!{\tt m}^{k+i}}\Big){\rm d}y+\bar I_k(t)\\&\leq&
(1+2\tilde c)\Big(\sum_{i=0}^{k-2}\frac{(2i)! t^{k+i}}{(i!)^2 (k+i)!{\tt m}^{k+i}}+\frac{t^{2k-1}}{((k-1)!)^2 (2k-1){\tt m}^{2k-1}}\Big)\\&+&
2\Big(\sum_{i=0}^{k-2}\frac{\big(\sum_{\ell=1}^{k-1-i}\frac{(2i+\ell)!}{\ell!} \bar c^\ell\big)t^{k+i}}{(i!)^2(k+i)!{\tt m}^{k+i}}+\sum_{i=0}^{k-1}\binom{k}{i}\frac{\bar c^{k-i}t^{k+i}}{k!i!{\tt m}^{k+i}}\Big)\\
&=&(1+2\tilde c)\sum_{i=0}^{k-1}\frac{(2i)! t^{k+i}}{(i!)^2(k+i)!{\tt m}^{k+i}}+2\sum_{i=0}^{k-1}\frac{\big(\sum_{\ell=1}^{k-i}\frac{(2i+\ell)!}{\ell!} \bar c^\ell\big) t^{k+i}}{(i!)^2(k+i)!{\tt m}^{k+i}},
\end{eqnarray*}
and the claim follows.

Using now \eqref{aux10} in combination with $\bar V(t)\leq {\tt m}^{-1}t$ for $t\geq 0$ yields
\begin{multline*}
\me \bigg(\sum_{r\geq 1}\big(\bar N^{(r)}_{j-1}(t-\bar T_r)-\bar V_{j-1}(t-\bar T_r)\big)\1_{\{\bar T_r\leq t\}}\bigg)^2 =\int_0^t \bar D_{j-1}(y){\rm
d}\bar V(y)\\\leq (1+2\tilde c)\sum_{i=0}^{j-2}\frac{t^{j+i}}{(i!)^2}\frac{(2i)!}{(j+i)!{\tt m}^{j+i}}+ 2\sum_{i=0}^{j-2}\frac{\big(\sum_{\ell=1}^{j-1-i}\frac{(2i+\ell)!}{\ell!} \bar c^\ell\big) t^{j+i}}{(i!)^2(j+i)!{\tt m}^{j+i}},\quad j\geq 2,\, t\geq 0.
\end{multline*}
A slight modification (now the additional factors ${\tt m}^{-(j+i)}$ appear) of the argument used in the proof of part (a) show that the first sum grows like $\frac{t^{2j-1}}{4((j-1)!)^2{\tt m}^{2j-2}}$. Let us show that the second sum grows as its $(j-2)$th term $R_j(t):=\frac{\bar c t^{2j-2}}{(j-2)!(j-1)!{\tt m}^{2j-2}}$. Indeed, using the inequality $$\sum_{\ell=1}^{j-1-i}\frac{(2i+\ell)!}{\ell!} \bar c^\ell\leq \sum_{\ell=1}^{j-1-i}\frac{\bar c^\ell}{\ell!}(j-1+i)!\leq e^{\bar c}(j-1+i)!,$$ we obtain for large $t$ $$\sum_{i=0}^{j-3}\frac{\big(\sum_{\ell=1}^{j-1-i}\frac{(2i+\ell)!}{\ell!} \bar c^\ell\big) t^{j+i}}{R_j(t)(i!)^2(j+i)!{\tt m}^{j+i}}\leq \frac{e^{\bar c}}{\bar c j^3 (j-1)}\sum_{i=0}^{j-3}\Big(\frac{{\tt m}}{t}\Big)^{j-i-2}\Big(\frac{j!}{i!}\Big)^2\leq \frac{e^{\bar c}j}{\bar c (j-1)}
\sum_{i\geq 1} \Big(\frac{{\tt m} j^2}{t}\Big)^i$$ having utilized \eqref{eq:bin} for the last inequality. The right-hand side converges to zero because $j=j(t)=o(t^{1/2})$ as $t\to\infty$.
This proves \eqref{121120a}. Relation \eqref{121121a} follows from \eqref{121120a}.
\end{proof}

\subsection{Proofs of Proposition \ref{1formain100} and Theorem \ref{main4}}\label{auxded}

\begin{proof}[Proof of Proposition \ref{1formain100}]
In view of Markov's inequality and the Cram\'{e}r-Wold device, it suffices to show that
\begin{equation*}
\lim_{t\to\infty} \frac{\lfloor j(t) \rfloor^{1/2}(\lfloor j(t)\rfloor-1)!}{{\tt m}^{-\lfloor j(t)\rfloor}t^{\lfloor j(t)\rfloor-1/2}}\me\big|N_{\lfloor j(t)\rfloor}(t)-\bar N_{\lfloor j(t)\rfloor}(t)\big|=0
\end{equation*}
and
\begin{equation*}
\lim_{t\to\infty} \frac{\lfloor j(t) \rfloor^{1/2}(\lfloor j(t)\rfloor-1)!}{{\tt m}^{-\lfloor j(t)\rfloor}t^{\lfloor j(t)\rfloor-1/2}}\me\bigg|\sum_{r\geq 1}\Big(\frac{(t-\bar T_r)^{\lfloor j(t)\rfloor-1}}{(\lfloor j(t)\rfloor-1)!{\tt m}^{\lfloor j(t)\rfloor-1}}-\bar V_{\lfloor j(t)\rfloor-1}(t-\bar T_r)\Big)\1_{\{\bar T_r\leq t\}}\bigg|=0.
\end{equation*}
The first of these immediately follows from \eqref{121}. Since the expectation appearing in the second limit relation is equal to $$\int_{[0,\,t]}\Big(\frac{(t-y)^{\lfloor j(t)\rfloor-1}}{(\lfloor j(t)\rfloor-1)!{\tt m}^{\lfloor j(t)\rfloor-1}}-\bar V_{\lfloor j(t)\rfloor-1}(t-y)\Big){\rm d}\bar V(y),$$ the second limit relation is ensured by \eqref{122}.
\end{proof}

\begin{proof}[Proof of Theorem \ref{main4}]
In view of Markov's inequality and the Cram\'{e}r-Wold device weak convergence of the finite-dimensional distributions to the zero vector follows from
$$\lim_{t\to\infty}\frac{\lfloor j(t)\rfloor ((\lfloor j(t)\rfloor-1)!)^2}{{\tt m}^{-2\lfloor j(t)\rfloor} t^{2\lfloor j(t)\rfloor-1}}\me
\Big(N^\ast_{\lfloor j(t)\rfloor}(t)-\sum_{r\geq 1}V^\ast_{\lfloor j(t)\rfloor-1}(t-T^\ast_r)\1_{\{T^\ast_r\leq t\}}\Big)^2=0$$
which is justified by Lemma \ref{mom_asy}.
\end{proof}

\section{Proof of Theorem \ref{main5}}\label{sect:flt}

We start with a couple of lemmas.
\begin{lemma}\label{aux123}
Under the assumptions and notation of Theorem \ref{main5},
\begin{equation}\label{230}
\lim_{t\to\infty}t^{-1/2}\me|N^\ast(t)-{\tt m}^{-1}t|={\tt s}^2{\tt m}^{-3}\me |B(1)|.
\end{equation}
\end{lemma}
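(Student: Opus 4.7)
The plan is to upgrade a standard central limit theorem for $N^\ast(t)$ to convergence of first absolute moments via uniform integrability. The starting point is the classical CLT for the counting process of a perturbed random walk: under ${\tt s}^2={\rm Var}\,\xi\in(0,\infty)$ and $\me\eta<\infty$ (with nonlatticeness of $\xi$ needed only for the stationary version $\bar N$), one has
$$\frac{N^\ast(t)-{\tt m}^{-1}t}{\sqrt{t}}\ \overset{{\rm d}}{\longrightarrow}\ \sigma_\infty B(1),\qquad \sigma_\infty:=({\tt s}^2/{\tt m}^3)^{1/2}.$$
This is a standard consequence of inverting the functional CLT for $(S_n)$, combined with the observation that under $\me\eta<\infty$ the perturbation $\eta_i$ in $T_i=S_{i-1}+\eta_i$ shifts positions by only $O(1)$ on average and is therefore negligible on the scale $\sqrt{t}$; one can also invoke the ready-made statement from Chapter 3 of \cite{Iksanov:2016}. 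Given this weak limit, \eqref{230} will follow as soon as the family $\{|N^\ast(t)-{\tt m}^{-1}t|/\sqrt{t}\}_{t\geq 1}$ is uniformly integrable, since then the weak convergence promotes to convergence in $L^1$ and the limit is $\sigma_\infty\me|B(1)|$.

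For uniform integrability the natural route is $L^2$-boundedness, namely
$$\sup_{t\geq 1}\,t^{-1}\me(N^\ast(t)-{\tt m}^{-1}t)^2<\infty.$$
Decomposing $N^\ast(t)-{\tt m}^{-1}t=(N^\ast(t)-V^\ast(t))+(V^\ast(t)-{\tt m}^{-1}t)$, the deterministic second summand is $O(1)$ by \eqref{lord2} when $N^\ast=N$ and by \eqref{aux987} when $N^\ast=\bar N$, so it suffices to bound the variance ${\rm Var}\,N^\ast(t)$. I would specialize the decomposition \eqref{aux5} to $j=1$ (so that $N_0^\ast=0$ and only the second sum survives), which reduces the task to the quantity $I_1^\ast(t)=\me(N^\ast(t)-V^\ast(t))^2$. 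This is then controlled by the same two-sum expansion leading to \eqref{259} (with the convention $V_0^\ast\equiv 1$), whose dominant contribution is linear in $t$ under ${\tt s}^2<\infty$ and $\me\eta<\infty$. Both $T^\ast=T$ and $T^\ast=\bar T$ can be handled uniformly this way, yielding ${\rm Var}\,N^\ast(t)=O(t)$ and thus the desired bound.

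Combining the CLT above with the $L^2$-bound gives uniform integrability of $|N^\ast(t)-{\tt m}^{-1}t|/\sqrt{t}$, hence convergence of first moments:
$$t^{-1/2}\me|N^\ast(t)-{\tt m}^{-1}t|\ \longrightarrow\ \sigma_\infty\me|B(1)|=({\tt s}^2{\tt m}^{-3})^{1/2}\me|B(1)|,$$
which is \eqref{230}. The step I expect to be the only real work is the second-moment bound: although conceptually elementary, it demands careful bookkeeping of the covariances of the indicators $\1_{\{T_r^\ast\leq t\}}$ induced by the shared backbone $(S_i)$ and by the perturbations $(\eta_i)$, and in particular requires treating $\bar N$ on the same footing as $N$. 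The CLT and the uniform-integrability-to-$L^1$ passage are essentially off-the-shelf once this bound is in hand.
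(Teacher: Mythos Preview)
Your approach is correct and matches the paper's in its skeleton: CLT for $N^\ast(t)$ plus $L^2$-boundedness to get uniform integrability, hence $L^1$-convergence. The paper carries this out for $N^\ast=N$ by citing external results (part (B1) of Theorem~3.2 in \cite{Alsmeyer+Iksanov+Marynych:2017} for the CLT and Lemma~4.2(b) in \cite{Gnedin+Iksanov:2020} for the variance bound $\me(N(t)-V(t))^2=O(t)$), whereas you propose to extract the $O(t)$ variance bound from the paper's own machinery via \eqref{aux5}--\eqref{259} specialized to $j=1$; both routes are fine.

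The one genuine difference is the treatment of $\bar N$. You propose to repeat the variance computation for the stationary version, which works but requires redoing the covariance bookkeeping. The paper instead observes that $T_r\leq \bar T_r$ a.s., whence $\me|N(t)-\bar N(t)|=V(t)-\bar V(t)\leq c_0+{\tt m}^{-1}\me\eta=O(1)$, and then deduces \eqref{230} for $\bar N$ from the already proved case $N^\ast=N$ by the triangle inequality. This $L^1$ comparison is cheaper and avoids any second-moment work for $\bar N$; it is worth noting as an alternative to your uniform treatment.
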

\begin{proof}
Assume first that $N^\ast=N$. According to part (B1) of Theorem 3.2 in \cite{Alsmeyer+Iksanov+Marynych:2017},
\begin{equation*}
\frac{N(t)-{\tt m}^{-1}\int_0^t G(y){\rm d}y}{({\tt s}^2{\tt m}^{-3}t)^{1/2}}~{\overset{{\rm
d}}\longrightarrow}~B(1),\quad t\to\infty.
\end{equation*}
By assumption, $\lim_{t\to\infty} \int_0^t (1-G(y)){\rm d}y=\me \eta<\infty$ which ensures that the last limit relation is equivalent to
\begin{equation}\label{clt}
\frac{N(t)-{\tt m}^{-1}t}{({\tt s}^2{\tt m}^{-3}t)^{1/2}}~{\overset{{\rm
d}}\longrightarrow}~B(1),\quad t\to\infty.
\end{equation}
By Lemma 4.2(b) in \cite{Gnedin+Iksanov:2020}, $\me (N(t)-V(t))^2=O(t)$ as $t\to\infty$. This in combination with \eqref{lord2} yields $\me (N(t)-{\tt m}^{-1}t)^2=O(t)$ as $t\to\infty$ which guarantees that the family $(t^{-1/2}(N(t)-{\tt m}^{-1}t))_{t\geq 1}$ is uniformly integrable. This together with \eqref{clt} completes the proof of \eqref{230} in the present setting.

Assume now that $N^\ast=\bar N$. According to \eqref{lord1} and \eqref{aux987}, $\me |N(t)-\bar N(t)|=V(t)-\bar V(t)\leq c_0+{\tt m}^{-1}\me \eta$ for all $t\geq 0$. The claim follows from this, the already proved relation \eqref{230} with $N^\ast=N$ and the triangle inequality.
\end{proof}

\begin{lemma}\label{lem:flt}
Under the assumptions and notation of Theorem \ref{main5}, as $t\to\infty$,
\begin{equation}\label{eq:flt}
\Big(\frac{N^\ast(ut)-{\tt m}^{-1}ut}{({\tt s}^2 {\tt m}^{-3}t)^{1/2}}\Big)_{u\geq 0}~\Rightarrow~ (B(u))_{u\geq 0}
\end{equation}
in the $J_1$-topology on $D[0,\infty)$.
\end{lemma}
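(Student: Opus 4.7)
Plan. The one-dimensional weak limit \eqref{clt} invoked in the proof of Lemma \ref{aux123} was extracted from part (B1) of Theorem 3.2 in \cite{Alsmeyer+Iksanov+Marynych:2017}; the argument there actually delivers the $J_1$-functional statement on $D[0,\infty)$. I would therefore organize the proof in two parts: first borrow the functional CLT for $N^\ast=N$ from the cited reference, then transfer it to $N^\ast=\bar N$ by coupling.

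\textbf{Step 1 (FCLT for $N$).} Quote the functional version of part (B1) of Theorem 3.2 in \cite{Alsmeyer+Iksanov+Marynych:2017}, which under ${\tt s}^2\in(0,\infty)$ and $\me\eta<\infty$ asserts
\begin{equation*}
\Big(\frac{N(ut)-{\tt m}^{-1}\int_0^{ut}G(y)\,{\rm d}y}{({\tt s}^2{\tt m}^{-3}t)^{1/2}}\Big)_{u\geq 0}~\Rightarrow~(B(u))_{u\geq 0}
\end{equation*}
in the $J_1$-topology on $D[0,\infty)$. Because $\int_0^{ut}(1-G(y))\,{\rm d}y\leq \me\eta$, the centering $\int_0^{ut}G(y)\,{\rm d}y$ differs from $ut$ by a quantity that is uniformly bounded in $(t,u)$, and upon division by $\sqrt{t}$ it vanishes uniformly on every $[0,T]$. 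Slutsky's lemma then yields \eqref{eq:flt} for $N^\ast=N$.

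\textbf{Step 2 (Transfer to $\bar N$).} Since $\bar T_i=\bar S_0+T_i$ with $\bar S_0$ a.s.\ finite and independent of $(T_i)_{i\in\mn}$, one has $\bar N(t)=N((t-\bar S_0)^+)$ and thus
\begin{equation*}
\frac{\bar N(ut)-{\tt m}^{-1}ut}{({\tt s}^2{\tt m}^{-3}t)^{1/2}}= \frac{N((ut-\bar S_0)^+)-{\tt m}^{-1}(ut-\bar S_0)^+}{({\tt s}^2{\tt m}^{-3}t)^{1/2}}- \frac{{\tt m}^{-1}\min(ut,\bar S_0)}{({\tt s}^2{\tt m}^{-3}t)^{1/2}}.
\end{equation*}
The last summand is bounded in absolute value by ${\tt m}^{-1}\bar S_0/({\tt s}^2{\tt m}^{-3}t)^{1/2}$, which tends to $0$ in probability. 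For the first summand, I would condition on $\bar S_0$ and combine Step 1 with the observation that the random time-shift $u\mapsto u-\bar S_0/t$ is asymptotically the identity map in $D[0,\infty)$; by the a.s.\ continuity of Brownian sample paths it is negligible in the $J_1$-metric, and another application of Slutsky's theorem delivers \eqref{eq:flt} for $N^\ast=\bar N$.

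\textbf{Main obstacle.} If one does not want to appeal to the functional part of \cite{Alsmeyer+Iksanov+Marynych:2017}, one has to prove the FCLT for $N$ from scratch. The most natural route is to reduce it to the classical FCLT for the pure renewal counting function of $(S_n)$ (which follows from Donsker's invariance principle and the first-passage inverse via the continuous mapping theorem) by showing that $\sup_{u\in[0,T]}\big|N(ut)-\sum_{i\geq 0}\1_{\{S_i\leq ut\}}\big|=o(\sqrt{t})$ in probability. The technical heart of this approach is controlling uniformly in $u$ the number of indices $i$ with $S_i\leq ut<S_i+\eta_{i+1}$; this is precisely where the assumption $\me\eta<\infty$ is most delicate (and any refinement to $\me\eta^a<\infty$ with $a>1/2$, as noted in the remark following Theorem \ref{main5}, would require a genuinely sharper argument here).
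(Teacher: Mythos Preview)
Your proposal is correct and Step 1 is identical to the paper's argument. For Step 2 the paper proceeds slightly differently: rather than a random time-change plus Slutsky, it cites Proposition 3.3 of \cite{Alsmeyer+Iksanov+Marynych:2017} to obtain directly that $t^{-1/2}\sup_{u\in[0,T]}(N(ut)-N(ut-h))\stackrel{\mmp}{\to}0$ for every fixed $h>0$, and then uses $\bar N(t)=N(t-\bar S_0)$ with $\bar S_0$ independent of $N$ to conclude $t^{-1/2}\sup_{u\in[0,T]}(N(ut)-\bar N(ut))\stackrel{\mmp}{\to}0$; this is morally the same mechanism as your time-shift argument, just packaged as a ready-made citation.
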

\begin{proof}
When $N^\ast=N$, this follows from part (B1) of Theorem 3.2 in \cite{Alsmeyer+Iksanov+Marynych:2017} and the reasoning given right before formula \eqref{clt}.

Assume now that $N^\ast=\bar N$. Then \eqref{eq:flt} is ensured by the limit relation: for all $u\in [0,T]$
$$t^{-1/2}\sup_{u\in [0,\,T]}(N(tu)-\bar N(tu)) \stackrel{\rm{\mathbb{P}}}{\longrightarrow} 0, \quad t \to \infty,$$ where, as usual, $\stackrel{\rm{\mathbb{P}}}{\longrightarrow}$ denotes convergence in probability. The latter is a consequence of the equality
$\bar N(t)=N(t-\bar S_0)$, $t\geq 0$ (put $N(t)=0$ for $t<0$), the relation obtained in Proposition 3.3 of \cite{Alsmeyer+Iksanov+Marynych:2017} $$t^{-1/2}\sup_{u \in [0,\,T]}(N(ut)-N(ut-h))\stackrel{\rm{\mathbb{P}}}{\longrightarrow} 0, \quad t \to
\infty$$ which holds for any positive $h$ and $T$, and the fact
that $\bar S_0$ is independent of $(N(t))_{t\in\mr}$.
\end{proof}

\begin{proof}[Proof of Theorem \ref{main5}]

Our subsequent argument follows the standard path: we shall prove weak convergence of the finite-dimensional distributions and then tightness.

\noindent {\sc Proof of the finite-dimensional distributions in \eqref{clt22}}. As usual, we shall use the Cram\'{e}r-Wold device. Namely, we intend to show that for
any $\ell\in\mn$, any real $\alpha_1,\ldots, \alpha_\ell$ and any
$0<u_1<\ldots<u_\ell<\infty$, as $t\to\infty$,
\begin{equation}\label{fidi}
\sum_{i=1}^\ell \alpha_i\frac{j^{1/2}(\lfloor j u_i\rfloor-1)! Z(j u_i,
t)}{({\tt s}^2{\tt m}^{-2\lfloor j u_i\rfloor-1}t^{2\lfloor ju_i\rfloor-1})^{1/2}}~{\overset{{\rm
d}}\longrightarrow}~\sum_{i=1}^\ell \alpha_i u_i\int_0^\infty B(y)e^{-u_i y}{\rm d}y,
\end{equation}
where $$Z(ju, t):=\sum_{r\geq
1}\frac{(t-T^\ast_r)^{\lfloor ju\rfloor-1}}{(\lfloor ju\rfloor-1)!{\tt m}^{\lfloor j u\rfloor-1}}\1_{\{T^\ast_r\leq
t\}}-\frac{t^{\lfloor ju\rfloor}}{(\lfloor j u\rfloor)!{\tt m}^{\lfloor ju\rfloor}}.$$ To ease notation,
here and hereafter, we write $j$ for $j(t)$.

We have for any $u, T>0$ and sufficiently large $t$
\begin{eqnarray*}
\frac{j^{1/2}(\lfloor j u\rfloor-1)! Z(ju,t)}{({\tt s}^2{\tt m}^{-2\lfloor j u\rfloor-1}t^{2\lfloor ju\rfloor-1})^{1/2}}&=&\frac{j^{1/2}}{({\tt s}^2{\tt m}^{-3} t^{2\lfloor ju\rfloor-1})^{1/2}}
\int_{[0,\,t]}(t-y)^{\lfloor ju\rfloor-1}{\rm d}(N^\ast(y)-{\tt m}^{-1}y)\\&=&\frac{j^{1/2}(\lfloor j u\rfloor-1)}{({\tt s}^2{\tt m}^{-3}t^{2\lfloor ju\rfloor-1})^{1/2}}\bigg(\int_0^{Tt/j}(N^\ast(y)-{\tt m}^{-1}y)(t-y)^{\lfloor ju\rfloor -2}{\rm d}y\\&+&\int_{Tt/j}^t (N^\ast(y)-{\tt m}^{-1}y)(t-y)^{\lfloor ju\rfloor-2}{\rm d}y\bigg)\\&=& \frac{\lfloor j u\rfloor-1}{j}\int_0^T \frac{N^\ast(yt/j)-{\tt m}^{-1}yt/j}{({\tt s}^2{\tt m}^{-3}t/j)^{1/2}}\big(1-\frac{y}{j}\big)^{\lfloor ju\rfloor-2}{\rm d}y\\&+&
\frac{j^{1/2}(\lfloor ju\rfloor-1)}{({\tt s}^2{\tt m}^{-3}t^{2\lfloor ju\rfloor-1})^{1/2}}\int_{Tt/j}^t
(N^\ast(y)-{\tt m}^{-1}y)(t-y)^{\lfloor j u\rfloor-2}{\rm d}y.
\end{eqnarray*}
By Lemma \ref{lem:flt}, $$\Big(\frac{N^\ast(ut/j)-{\tt m}^{-1}ut/j}{({\tt s}^2 {\tt m}^{-3}t/j)^{1/2}}\Big)_{u\geq 0}~\Rightarrow~ (B(u))_{u\geq 0}$$
in the $J_1$-topology on $D[0,\infty)$. Here, we have used the assumption $t/j(t)\to\infty$. By Skorokhod's
representation theorem there exist versions $\widehat N_t$ and $\widehat B$ of $((N^\ast(ut/j)-{\tt m}^{-1}ut/j)/({\tt s}^2{\tt m}^{-3}t/j)^{1/2})_{u\geq 0}$ and $B$, respectively such that
\begin{equation}\label{cs}
\lim_{t\to\infty}\sup_{y\in [0,\,T]}\bigg|\widehat N_t(y)-\widehat B(y)\bigg|=0\quad\text{a.s.}
\end{equation}
for all $T>0$. This in combination with
$$\lim_{t\to\infty}\sup_{y\in [0,\,T]} \Big|\big(1-\frac{y}{j(t)}\big)^{\lfloor j (t)u\rfloor-2}-e^{-uy}\Big|=0$$ yields
$$\lim_{t\to\infty}\sup_{y\in [0,\,T]}\Big|\widehat{N}_t(y)\big(1-\frac{y}{j}\big)^{\lfloor ju\rfloor-2}-\widehat{B}(y)e^{-uy}\Big|=0\quad \text{a.s.}$$
Thus,
$$\lim_{t\to\infty}\sum_{i=1}^\ell \alpha_i\frac{\lfloor ju_i\rfloor-1}{j} \int_0^T \widehat N_t(y)\big(1-\frac{y}{j}\big)^{\lfloor ju_i\rfloor-2}{\rm d}y=\sum_{i=1}^\ell \alpha_i
u_i\int_0^T \widehat{B}(y)e^{-u_iy}{\rm d}y\quad \text{a.s.}$$ and
thereupon
$$\sum_{i=1}^\ell \alpha_i \frac{\lfloor j u_i\rfloor-1}{j}\int_0^T
\frac{N^\ast(yt/j)-{\tt m}^{-1}yt/j}{({\tt s}^2{\tt m}^{-3}t/j)^{1/2}}\big(1-\frac{y}{j}\big)^{\lfloor j u_i\rfloor-2}{\rm
d}y~{\overset{{\rm
d}}\longrightarrow} ~\sum_{i=1}^\ell \alpha_i u_i\int_0^T B(y)e^{-u_iy}{\rm
d}y$$ as $t\to\infty$. Since $\lim_{T\to\infty}\sum_{i=1}^\ell \alpha_i u_i \int_0^T
B(y)e^{-u_iy}{\rm d}y=\sum_{i=1}^\ell \alpha_i u_i \int_0^\infty
B(y)e^{-u_iy}{\rm d}y$ a.s.\ it remains to prove that
$$\lim_{T\to\infty}{\lim\sup}_{t\to\infty}\,\mmp\bigg\{\bigg|\sum_{i=1}^\ell\alpha_i \frac{j^{1/2}(\lfloor j u_i\rfloor-1)}{t^{\lfloor ju_i\rfloor-1/2}}\int_{Tt/j}^t
(N^\ast(y)-{\tt m}^{-1}y)(t-y)^{\lfloor ju_i\rfloor-2}{\rm d}y\bigg|>\varepsilon\bigg\}=0$$ for all $\varepsilon>0$. In view
of Markov's inequality and the fact that $\me |N^\ast(y)-{\tt m}^{-1}y|\sim
{\tt s}{\tt m}^{-3/2} \me |B(1)|y^{1/2}$ as $y\to\infty$ (see Lemma \ref{aux123}) the latter is a consequence of
$$\lim_{T\to\infty}{\lim\sup}_{t\to\infty}\,\frac{j^{1/2}(\lfloor ju\rfloor-1)}{t^{\lfloor ju\rfloor-1/2}}\int_{Tt/j}^t
y^{1/2}(t-y)^{\lfloor j u\rfloor-2}{\rm d}y=0$$ for $u>0$. To justify it,
observe that
$$\frac{j^{1/2}(\lfloor ju\rfloor-1)}{t^{\lfloor j u\rfloor-1/2}}\int_{Tt/j}^t
y^{1/2}(t-y)^{\lfloor j u\rfloor-2}{\rm d}y=\frac{\lfloor j u\rfloor-1}{j}\int_T^j
y^{1/2}\big(1-\frac{y}{j}\big)^{\lfloor j u\rfloor-2}{\rm d}y~\to~
u\int_T^\infty y^{1/2}e^{-uy}{\rm d}y$$ as $t\to\infty$ by
Lebesgue's dominated convergence theorem. The proof of
\eqref{fidi} is complete. For later use, we note that exactly the
same argument leads to
\begin{equation}\label{intermediate}
\int_0^j \bigg|\frac{N^\ast(yt/j)-{\tt m}^{-1}yt/j}{({\tt s}^2{\tt m}^{-3}t/j)^{1/2}}\bigg|\big(1-\frac{y}{j}\big)^{\lfloor j u\rfloor-2}(1+y){\rm d}y~{\overset{{\rm
d}}\longrightarrow}~\int_0^\infty |B(y)|e^{-uy}(1+y){\rm d}y
\end{equation}
for $u>0$, as $t\to\infty$.

\noindent {\sc Proof of tightness in \eqref{clt22}}. By Theorem 15.5 in
\cite{Billingsley:1968} it suffices to show that for any
$0<a<b<\infty$, $\varepsilon>0$ and $\gamma\in (0,1)$ there exist
$t_0>0$ and $\delta>0$ such that
\begin{equation}\label{tight}
\mmp\bigg\{\sup_{a\leq u,v\leq b, |u-v|\leq \delta}\,\bigg|\frac{j^{1/2}(\lfloor ju\rfloor-1)! Z(ju,
t)}{({\tt s}^2{\tt m}^{-2\lfloor ju\rfloor-1}t^{2\lfloor ju\rfloor-1})^{1/2}}-\frac{j^{1/2}(\lfloor jv\rfloor-1)!
Z(jv,t)}{({\tt s}^2{\tt m}^{-2\lfloor jv\rfloor-1}t^{2\lfloor j v\rfloor-1})^{1/2}}\bigg|>\varepsilon\bigg\}\leq \gamma
\end{equation}
for all $t\geq t_0$. As a preparation for the proof of
\eqref{tight}, let us note that for $a\leq u,v\leq b$ such that
$|u-v|\leq \delta$, $y\in [0,j]$ and large enough $j$ we have
\begin{eqnarray*}
&&\bigg|\frac{\lfloor j u\rfloor-1}{j}\big(1-\frac{y}{j}\big)^{\lfloor ju\rfloor-2}-\frac{\lfloor j v\rfloor-1}{j}\big(1-\frac{y}{j}\big)^{\lfloor j v\rfloor-2}\bigg|\\&=&\big(1-\frac{y}{j}\big)^{\lfloor j (u\wedge
v)\rfloor-2} \bigg|\frac{\lfloor j (u\vee v)\rfloor-\lfloor j(u\wedge v)\rfloor}{j}\big(1-\frac{y}{j}\big)^{\lfloor j(u\vee v)\rfloor-\lfloor j(u\wedge
v)\rfloor}\\&-&\frac{\lfloor j (u\wedge v)\rfloor-1}{j}\big(1-\big(1-\frac{y}{j}\big)^{\lfloor j(u\vee v)\rfloor-\lfloor j (u\wedge
v)\rfloor}\big)\bigg|\\&\leq&
\big(1-\frac{y}{j}\big)^{\lfloor ja\rfloor-2}\bigg(\frac{\lfloor j(u\vee
v)\rfloor-\lfloor j (u\wedge v)\rfloor}{j}+b\frac{\lfloor j(u\vee v)\rfloor-\lfloor j (u\wedge
v)\rfloor}{j}y\bigg)\\&\leq&
C|u-v|\big(1-\frac{y}{j}\big)^{\lfloor j a\rfloor-2}(1+y)\leq C\delta
\big(1-\frac{y}{j}\big)^{\lfloor ja\rfloor-2}(1+y)
\end{eqnarray*}
for appropriate constant $C>0$. With this at hand
\begin{eqnarray*}
&&\sup_{a\leq u,v\leq b, |u-v|\leq
\delta}\,\bigg|\frac{j^{1/2}(\lfloor ju\rfloor-1)! Z(ju,t)}{({\tt s}^2{\tt m}^{-2\lfloor ju\rfloor-1}t^{2\lfloor ju\rfloor-1})^{1/2}}-\frac{j^{1/2}(\lfloor j v\rfloor-1)!
Z(jv,t)}{({\tt s}^2{\tt m}^{-2\lfloor jv\rfloor-1}t^{2\lfloor j v\rfloor-1})^{1/2}}\bigg|\\&=&
\sup_{a\leq u,v\leq b, |u-v|\leq \delta}\,\bigg|\int_0^j
\frac{N^\ast(yt/j)-{\tt m}^{-1}yt/j}{({\tt s}^2{\tt m}^{-3}t/j)^{1/2}}\Big(\frac{\lfloor j u\rfloor-1}{j}\big(1-\frac{y}{j}\big)^{\lfloor j u\rfloor-2}\\&-&\frac{\lfloor j v\rfloor-1}{j}\big(1-\frac{y}{j}\big)^{\lfloor j v\rfloor-2}\Big){\rm
d}y\bigg|\\&\leq& C\delta \int_0^j
\bigg|\frac{N^\ast(yt/j)-{\tt m}^{-1}yt/j}{({\tt s}^2{\tt m}^{-3}t/j)^{1/2}}\bigg|\big(1-\frac{y}{j}\big)^{\lfloor j a\rfloor-2}(1+y){\rm d}y.
\end{eqnarray*}
Invoking \eqref{intermediate} and choosing $\delta$ sufficiently small we obtain \eqref{tight}. The proof of Theorem \ref{main5} is complete.
\end{proof}

\section{Proofs of the main results}\label{sect:main}

As far as Theorem \ref{main3} is concerned, we use a decomposition
\begin{eqnarray*}
&&K_n(\lfloor j_n u\rfloor)-(\log n)^{\lfloor j_n u\rfloor}/(\lfloor j_n u\rfloor)!= (K_n(\lfloor j_n u\rfloor)-\rho_{\lfloor j_n u\rfloor}(n))\\&+&
\Big(\rho_{\lfloor j_n u\rfloor}(n)-\sum_{r\geq 1}\frac{(\log n-T_r)^{\lfloor j_n u\rfloor-1}}{(\lfloor j_n u-1\rfloor)!}\1_{\{T_r\leq
\log n\}}\Big)\\&+&\Big(\sum_{r\geq 1}\frac{(\log n-T_r)^{\lfloor j_n u\rfloor-1}}{(\lfloor j_n u-1\rfloor)!}\1_{\{T_r\leq
\log n\}}-\dfrac{(\log n)^{\lfloor j_n u\rfloor}}{(\lfloor j_n u\rfloor)!}\Big)=:Y_1(n,u)+Y_2(n,u)+Y_3(n,u)
\end{eqnarray*}
with a perturbed random walk $T$ generated by $(\xi,\eta)$ satisfying \eqref{equa}. We claim that the following limit relations hold: as $n\to\infty$,
\begin{equation}\label{1}
\Bigg(\frac{\lfloor j_n\rfloor^{1/2}(\lfloor j_n u\rfloor-1)!\,Y_1(n,u)}{(\log n)^{\lfloor j_n
u\rfloor-1/2}}\Bigg)_{u>0}~{\overset{{\rm f.d.d.}}\longrightarrow}~(\Theta(u))_{u>0};
\end{equation}
\begin{equation}\label{2}
\Bigg(\frac{\lfloor j_n\rfloor^{1/2}(\lfloor j_n u\rfloor-1)!\,Y_2(n,u)}{(\log n)^{\lfloor j_n
u\rfloor-1/2}}\Bigg)_{u>0}~{\overset{{\rm f.d.d.}}\longrightarrow}~(\Theta(u))_{u>0};
\end{equation}
\begin{equation}\label{3}
\Bigg(\frac{\lfloor j_n\rfloor^{1/2}(\lfloor j_n u\rfloor-1)!\,Y_3(n,u)}{(\log n)^{\lfloor j_n
u\rfloor-1/2}}\Bigg)_{u>0}~{\overset{{\rm f.d.d.}}\longrightarrow}~(R(u))_{u>0}.
\end{equation}

Formula \eqref{1} states that $K_n(\lfloor j_n u\rfloor)$ is well-approximated by $\rho_{\lfloor j_n u\rfloor}(n)$. To explain why this connection is plausible, call a box $v$ with $|v|=\lfloor j_n u\rfloor$ {\it large} if $P(v)\geq 1/n$. On the average, a large box contains $nP(v)\geq 1$ balls. Hence, the number of large boxes $\rho_{\lfloor j_n u\rfloor}(n)$ should be close to the number of occupied boxes $K_n(\lfloor j_n u\rfloor)$. Since $\rho_{\lfloor j_n u\rfloor}(n)$ is a function of the environment alone, relation \eqref{1} can also be interpreted a bit differently: out of the two sources of randomness inherent to the infinite occupancy scheme in random environment the one (randomness of environment) dominates the other (variability of sampling).

Formula \eqref{2} follows by an application of Theorem \ref{main4} in the situation that \eqref{equa} prevails, with $\log n$ replacing $t$ and any positive function $t\mapsto j(t)$ satisfying $j(\log n)=j_n$ and $j(t)=o(t)$ as $t\to\infty$, after noting that $\rho_{\lfloor j_n u\rfloor}(n)=N_{\lfloor j(\log n)u\rfloor}(\log n)$.  Similarly, a specialization of Theorem \ref{main5} to the case \eqref{equa} (so that ${\tt m}={\tt s}^2=1$) with the same replacement and the same choice of $t\mapsto j(t)$ yields \eqref{3}.

For the proof of Theorem \ref{main100} we use a decomposition
\begin{eqnarray*}
&&K_n(\lfloor j_n u\rfloor)-(\mu^{-1} \log n)^{\lfloor j_n u\rfloor}/(\lfloor j_n u\rfloor)!= \big(K_n(\lfloor j_n u\rfloor)-\rho_{\lfloor j_n u\rfloor}(n)\big)\\&+&\big(\rho_{\lfloor j_n u\rfloor}(n)-\bar N_{\lfloor j_n u\rfloor}(\log n)\big)+\Big(\bar N_{\lfloor j_n u\rfloor}(\log n)-\sum_{r\geq 1} \bar V_{\lfloor j_n u\rfloor-1}(\log n-\bar T_r)\1_{\{\bar T_r\leq \log n\}}\Big)\\&-&\sum_{r\geq 1}\Big(\frac{(\log n-\bar T_r)^{\lfloor j_n u\rfloor-1}}{(\lfloor j_n u\rfloor-1)!\mu^{\lfloor j_n u\rfloor-1}}-\bar V_{\lfloor j_n u\rfloor-1}(\log n-\bar T_r)\Big)\1_{\{\bar T_r\leq \log n\}}\\&+&\Big(\sum_{r\geq 1}\frac{(\log n-\bar T_r)^{\lfloor j_n u\rfloor-1}}{\mu^{\lfloor j_n u\rfloor-1}(\lfloor j_n u-1\rfloor)!}\1_{\{\bar T_r\leq
\log n\}}-\dfrac{(\log n)^{\lfloor j_n u\rfloor}}{\mu^{\lfloor j_n u\rfloor}(\lfloor j_n u\rfloor)!}\Big)=:Y_1(n,u)+\sum_{i=2}^5 X_i(n,u)
\end{eqnarray*}
with a (stationary) perturbed random walk $\bar T$ generated by $(\xi,\eta)\od (|\log W|, |\log(1-W))|)$ for a random variable $W$ satisfying the assumptions of Theorem \ref{main100}. It suffices to check that, as $n\to\infty$,
\begin{equation}\label{1a}
\Bigg(\frac{\lfloor j_n\rfloor^{1/2}(\lfloor j_n u\rfloor-1)!\,Y_1(n,u)}{\mu^{-\lfloor j_n u\rfloor}(\log n)^{\lfloor j_n u\rfloor-1/2}}\Bigg)_{u>0}~{\overset{{\rm f.d.d.}}\longrightarrow}~(\Theta(u))_{u>0};
\end{equation}
for $i=2,3,4$
\begin{equation}\label{2a}
\Bigg(\frac{\lfloor j_n\rfloor^{1/2}(\lfloor j_n u\rfloor-1)!\,X_i(n,u)}{\mu^{-\lfloor j_n u\rfloor}(\log n)^{\lfloor j_n
u\rfloor-1/2}}\Bigg)_{u>0}~{\overset{{\rm f.d.d.}}\longrightarrow}~(\Theta(u))_{u>0};
\end{equation}
\begin{equation}\label{3a}
\Bigg(\frac{\lfloor j_n\rfloor^{1/2}(\lfloor j_n u\rfloor-1)!\,X_5(n,u)}{(\sigma^2 \mu^{-2\lfloor j_n u\rfloor-1}(\log n)^{2\lfloor j_n
u\rfloor-1})^{1/2}}\Bigg)_{u>0}~{\overset{{\rm f.d.d.}}\longrightarrow}~(R(u))_{u>0}.
\end{equation}

Formula \eqref{2a} for $i=2$ and $i=4$ follows from Proposition \ref{1formain100} in which we replace $t$ with $\log n$ and choose any positive function $t\mapsto j(t)$ satisfying $j(\log n)=j_n$ and $j(t)=o(t^{1/3})$ as $t\to\infty$. After similar adjustments formulae \eqref{2a} with $i=3$ and \eqref{3a} are ensured by Theorems \ref{main4} and \ref{main5}, respectively. Note that ${\tt s}^2=\sigma^2$ and ${\tt m}=\mu$.

Thus, we are left with checking \eqref{1} and \eqref{1a}. We start with a preparation for both proofs. In view of Markov's inequality and the Cram\'{e}r-Wold device, it is enough to show that, for fixed $u>0$,
\begin{equation}\label{aux21}
\lim_{n\to\infty} \frac{\lfloor j_n\rfloor^{1/2}(\lfloor j_n u\rfloor-1)!\,\me\big|K_n(\lfloor j_n u\rfloor)-\rho_{\lfloor j_n u\rfloor}(n)\big|}{\mu^{-\lfloor j_n u\rfloor}(\log n)^{\lfloor j_n u\rfloor-1/2}}=0,
\end{equation}
where $\mu=1$ in the setting of Theorem \ref{main3}, that is, when $W$ has a uniform distribution on $[0,1]$.

To this end, denote by $Z_{n,v}$ the number of balls in the box $v$ when $n$ balls have been thrown. Observe that, given $(P(v))_{|v|=j}$, $Z_{n,\,v}$ has a binomial distribution with parameters $n$ and $P(v)$. This follows from the branching property of the underlying weighted branching process and the following fact: if a random variable $Z_n$ has a binomial distribution with parameters $n$ and $p_1$ and is independent of $(Z^\prime_k)_{0\leq k\leq n}$, where $Z^\prime_k$ has a binomial distribution with parameters $k$ and $p_2$ ($Z_0=0$ a.s.), then $Z^\prime_{Z_n}$ has a binomial distribution with parameters $n$ and $p_1p_2$. Write, for fixed $u>0$,
\begin{multline*}
\big|K_n(\lfloor j_n u\rfloor)-\rho_{\lfloor j_n u\rfloor}(n)\big|=\Big|\sum_{|v|=\lfloor j_n u\rfloor}\1_{\{Z_{n,\,v}\geq 1\}}-\sum_{|v|=\lfloor j_n u\rfloor}\1_{\{nP(v)\geq 1\}}\Big|\\ \leq
 \sum_{|v|=\lfloor j_n u\rfloor}\1_{\{Z_{n,\,v}\geq 1,\,nP(v)<1\}}+\sum_{|v|=\lfloor j_n u\rfloor}\1_{\{Z_{n,\,v}=0,\,nP(v)\geq 1\}}
\end{multline*}
which gives
\begin{multline*}
\me\big(|K_n(\lfloor j_n u\rfloor)-\rho_{\lfloor j_n u\rfloor}(n)|\big| (P(v))_{|v|=\lfloor j_n u\rfloor}\big)\leq \sum_{|v|=\lfloor j_n u\rfloor}(1-(1-P(v))^n)\1_{\{nP(v)<1\}}\\+
\sum_{|v|=\lfloor j_n u\rfloor}(1-P(v))^n\1_{\{nP(v)\geq 1\}}\leq n\sum_{|v|=\lfloor j_n u\rfloor} P(v)\1_{\{nP(v)<1\}}+\sum_{|v|=\lfloor j_n u\rfloor}e^{-nP(v)}\1_{\{nP(v)\geq 1\}}\\=n\int_{(n,\,\infty)}x^{-1}{\rm d}\rho_{\lfloor j_n u\rfloor}(x)+\int_{[1,\,n]}e^{-n/x}{\rm d}\rho_{\lfloor j_n u\rfloor}(x).
\end{multline*}
Here, the first inequality follows from the already mentioned fact that, given $(P(v))_{|v|=\lfloor j_n u\rfloor}$, $Z_{n,\,v}$ has a binomial distribution with parameters $n$ and $P(v)$, and the second is a consequence of $1-(1-x)^n\leq nx$ and $(1-x)^n\leq e^{-nx}$ for $x\in [0,1]$, $n\in\mn$.

\noindent {\sc Proof of \eqref{aux21} in the setting of Theorem \ref{main3}}. The assumption that $W$ has a uniform distribution on $[0,1]$ entails
$$\me \rho_{\lfloor j_n u\rfloor}(x)=\frac{(\log x)^{\lfloor j_n u\rfloor}}{\lfloor j_n u \rfloor!},\quad x\geq 1$$ (this formula, in an equivalent form $V_j(t)=t^j/(j!)$ for $j\in\mn$ and $t\geq 0$, has already appeared at the beginning of the proof of Lemma \ref{mom_asy}(a)). Hence,
\begin{multline*}
I_n:=n\me\int_{(n,\,\infty)}y^{-1}{\rm d}\rho_{\lfloor j_n u\rfloor}(y)=\frac{n}{(\lfloor j_n u\rfloor-1)!}\int_n^\infty y^{-2}(\log y)^{\lfloor j_n u\rfloor-1}{\rm d}y=
\\\frac{n}{(\lfloor j_n u\rfloor-1)!}\int_{\log n}^\infty e^{-y}y^{\lfloor j_n u\rfloor-1}{\rm d}y=\sum_{i=0}^{\lfloor j_n u\rfloor-1}\frac{(\log n)^{i}}{i!}.
\end{multline*}
At the last step, we have used the following standard facts related to the gamma distribution with parameters $k\in\mn$ and $1$ (such a distributiion is sometimes called the Erlang distribution).
While the corresponding density is $x\mapsto ((k-1)!)^{-1} x^{k-1}e^{-x}\1_{(0,\infty)}(x)$, the distribution tail is $((k-1)!)^{-1} \int_x^\infty y^{k-1}e^{-y}{\rm d}y=e^{-x} \sum_{i=0}^{k-1} (x^i/(i!))$.

Let us prove that, for fixed $u>0$,
\begin{equation}\label{aux1}
\sum_{i=0}^{\lfloor j_n u\rfloor-1}\frac{(\log n)^{i}}{i!}~\sim~\frac{(\log n)^{\lfloor j_n u\rfloor-1}}{(\lfloor j_n u\rfloor-1)!},\quad n\to\infty.
\end{equation}
Indeed,
\begin{multline*}
\sum_{i=0}^{\lfloor j_n u\rfloor-1}\frac{(\log n)^{i}}{i!}=\frac{(\log n)^{\lfloor j_n u\rfloor-1}}{(\lfloor j_n u\rfloor-1)!}\Big(1+\sum_{i=1}^{\lfloor j_n u\rfloor-1}\frac{(\lfloor j_n u\rfloor-1)\cdot\ldots\cdot(\lfloor j_n u\rfloor-i)}{(\log n)^i}\Big)\\ \leq \frac{(\log n)^{\lfloor j_n u\rfloor-1}}{(\lfloor j_n u\rfloor-1)!} \sum_{i=0}^{\lfloor j_n u\rfloor-1}\frac{\lfloor j_n u\rfloor^i}{(\log n)^i}\leq\frac{(\log n)^{\lfloor j_n u\rfloor-1}}{(\lfloor j_n u\rfloor-1)!} \frac{1}{1-\lfloor j_n u\rfloor/\log n}~\sim~\frac{(\log n)^{\lfloor j_n u\rfloor-1}}{(\lfloor j_n u\rfloor-1)!}
\end{multline*}
having utilized $j_n=o(\log n)$ for the asymptotic equivalence. Since, on the other hand, $$I_n\geq \frac{(\log n)^{\lfloor j_n u\rfloor-1}}{(\lfloor j_n u\rfloor-1)!},$$ relation \eqref{aux1} follows. We note in passing that for \eqref{aux1} to hold the assumption $j_n=o(\log n)$ is of principal importance. If, for instance, $j_n\sim c\log n$ for some $c>0$, then \eqref{aux1} is no longer true.

With \eqref{aux1} at hand, we now conclude that $$\frac{j_n^{1/2}(\lfloor j_n u\rfloor-1)!I_n}{(\log n)^{\lfloor j_n u\rfloor-1/2}}\sim \Big(\frac{j_n}{\log n}\Big)^{1/2}~\to~ 0,\quad n\to\infty.$$
Put $y_n:=\exp((\log n/j_n)^{1/4})$ and write
\begin{eqnarray}
J_n&:=&\me \int_{[1,\,n]}e^{-n/x}{\rm d}\rho_{\lfloor j_n u\rfloor}(x)=\int_{[1,\,n/y_n]}e^{-n/x}{\rm d}\me \rho_{\lfloor j_n u\rfloor}(x)+\int_{(n/y_n,\,n]}e^{-n/x}{\rm d}\me \rho_{\lfloor j_n u\rfloor}(x)\notag\\ &\leq& e^{-y_n}\me \rho_{\lfloor j_n u\rfloor}(n)+ \me\rho_{\lfloor j_n u\rfloor}(n)-\me \rho_{\lfloor j_n u\rfloor}(n/y_n).\label{aux101}
\end{eqnarray}
We have
\begin{equation}\label{aux104}
\frac{j_n^{1/2}(\lfloor j_n u\rfloor-1)!e^{-y_n}\me \rho_{\lfloor j_n u\rfloor}(n)}{(\log n)^{\lfloor j_n u\rfloor-1/2}}=\frac{j_n}{\lfloor j_n u\rfloor}\Big(\frac{\log n}{j_n}\Big)^{1/2}\exp(-\exp((\log n/j_n)^{1/4}))\to 0
\end{equation}
because $\lim_{x\to 0+}x^2\exp(-e^x)=0$ and
$$\me \rho_{\lfloor j_n u\rfloor}(n)-\me\rho_{\lfloor j_n u\rfloor}(n/y_n)=\frac{(\log n)^{\lfloor j_n u\rfloor}}{\lfloor j_n u\rfloor}\big(1-(1-\log y_n/\log n\big)^{\lfloor j_n u\rfloor})\leq
\frac{(\log n)^{\lfloor j_n u\rfloor-1}}{(\lfloor j_n u\rfloor-1)!}\log y_n.$$ Hence,
\begin{equation}\label{aux102}
\frac{j_n^{1/2}(\lfloor j_n u\rfloor-1)!(\me \rho_{\lfloor j_n u\rfloor}(n)-\me \rho_{\lfloor j_n u\rfloor}(n/y_n))}{(\log n)^{\lfloor j_n u\rfloor-1/2}}\leq \Big(\frac{j_n}{\log n}\Big)^{1/2}\Big(\frac{\log n}{j_n}\Big)^{1/4}= \Big(\frac{j_n}{\log n}\Big)^{1/4}~\to~ 0.
\end{equation}
Combining pieces together we arrive at \eqref{aux21}.

\noindent {\sc Proof of \eqref{aux21} in the setting of Theorem \ref{main100}}. By Proposition \ref{aux5000}, for large $n$, $x\geq n$ and $j=j_n=o((\log n)^{1/3})$,
\begin{equation*}
\Big|\me \rho_j(x)-\frac{(\log x)^j}{j!\mu^j}\Big|\leq \frac{2cj (\log x)^{j-1}}{(j-1)!\mu^{j-1}}.
\end{equation*}
With this at hand, integrating by parts and using the calculations from the proof of \eqref{1} we obtain
\begin{multline*}
0\leq I_n=-\me \rho_{\lfloor j_n u\rfloor}(n)+n\int_n^\infty y^{-2}\me \rho_{\lfloor j_n u\rfloor}(y){\rm d}y\leq \frac{1}{\mu^{\lfloor j_n u\rfloor}}\sum_{i=0}^{\lfloor j_n u\rfloor-1}\frac{(\log n)^{i}}{i!}\\+\frac{2c\lfloor j_n u\rfloor (\log n)^{\lfloor j_n u\rfloor-1}}{(\lfloor j_n u\rfloor-1)!\mu^{\lfloor j_n u\rfloor-1}}+n\int_n^\infty y^{-2}\frac{2c\lfloor j_n u\rfloor (\log y)^{\lfloor j_n u\rfloor-1}}{(\lfloor j_n u\rfloor-1)!\mu^{\lfloor j_n u\rfloor-1}}{\rm d}y.
\end{multline*}
According to \eqref{aux1}, as $n\to\infty$,
\begin{multline*}
\frac{n \lfloor j_n u\rfloor}{(\lfloor j_n u\rfloor-1)!\mu^{\lfloor j_n u\rfloor-1}}\int_n^\infty y^{-2} (\log y)^{\lfloor j_n u\rfloor-1}{\rm d}y=
\frac{\lfloor j_n u\rfloor}{\mu^{\lfloor j_n u\rfloor-1}}\sum_{i=0}^{\lfloor j_n u\rfloor-1}\Big(\frac{\log n}{i!}\Big)^i\\~\sim~\frac{\lfloor j_n u\rfloor(\log n)^{\lfloor j_n u\rfloor-1}}{(\lfloor j_n u\rfloor-1)! \mu^{\lfloor j_n u\rfloor-1}}.
\end{multline*}
In view of $j_n=o((\log n)^{1/3})$,
\begin{equation}\label{aux103}
\frac{j_n^{1/2}(\lfloor j_n u\rfloor-1)!}{\mu^{-\lfloor j_n u\rfloor}(\log n)^{\lfloor j_n u\rfloor-1/2}}\frac{\lfloor j_n u\rfloor (\log n)^{\lfloor j_n u\rfloor-1}}
{(\lfloor j_n u\rfloor-1)!\mu^{\lfloor j_n u\rfloor-1}}~\sim~ \mu u^{1/2}\Big(\frac{j_n^3}{\log n}\Big)^{1/2}~\to~ 0,\quad n\to\infty.
\end{equation}
Another appeal to \eqref{aux1} completes the proof of
$$\lim_{n\to\infty} \frac{j_n^{1/2}(\lfloor j_n u\rfloor-1)!I_n}{\mu^{-\lfloor j_n u\rfloor}(\log n)^{\lfloor j_n u\rfloor-1/2}}=0.$$

Using \eqref{aux101} together with \eqref{basic} we conclude that $$J_n\leq \frac{e^{-y_n}(\log n)^{\lfloor j_n u\rfloor}}{(\lfloor j_n u\rfloor)!\mu^{\lfloor j_n u\rfloor}}+ \frac{(\log n)^{\lfloor j_n u\rfloor}}{\lfloor j_n u\rfloor\mu^{\lfloor j_n u\rfloor}}\big(1-(1-\log y_n/\log n\big)^{\lfloor j_n u\rfloor})+ O\Big(\frac{\lfloor j_n u\rfloor (\log y)^{\lfloor j_n u\rfloor-1}}{(\lfloor j_n u\rfloor-1)!\mu^{\lfloor j_n u\rfloor-1}}\Big),$$ where, as before, $y_n=\exp((\log n/j_n)^{1/4})$. Now \eqref{aux104}, \eqref{aux102} and \eqref{aux103} entail
$$\lim_{n\to\infty} \frac{j_n^{1/2}(\lfloor j_n u\rfloor-1)!J_n}{\mu^{-\lfloor j_n u\rfloor}(\log n)^{\lfloor j_n u\rfloor-1/2}}=0,$$ thereby completing the proof of \eqref{aux21} in the present setting.

\vspace{5mm}

\noindent {\bf Acknowledgement}. The authors thank the anonymous referee for several useful comments. D.~Buraczewski was  partially supported by the National Science Center, Poland (grant number  2019/33/ B/ST1/00207). A.~Iksanov gratefully acknowledges the support of Ulam programme funded by the Polish national agency for academic exchange (NAWA),\newline project no. PPN/ULM/2019/1/00010/DEC/1.

\end{document}